\theoremstyle{plain}
\newtheorem{thm}{Theorem}[subsection]
\newtheorem{cor}[thm]{Corollary}
\newtheorem{lem}[thm]{Lemma}
\newtheorem{prop}[thm]{Proposition}
\theoremstyle{definition}
\newtheorem{defn}[thm]{Definition}
\newtheorem{rem}[thm]{Remark}
\newcommand{\B}{\mathcal{B}}
\newcommand{\0}{\bar 0}
\numberwithin{equation}{subsection}
\def\Z{{\mathbb Z}}
\def\bbc{\mathbb C}
\def\:{\colon}
\newcommand{\fg}{\mathfrak{g}}
\def \fg{\mathfrak{g}}
\def \fh{\mathfrak{h}}
\def \fh{\mathfrak{h}}
\def\C{{\mathbb C}}
\def\Z{{\mathbb Z}}
\def\bbz{{\mathbb Z}}
\def\bbf{\mathbb{F}}
\def\bu{\textbf{U}}
\def\f{\mathcal{F}}
\newcommand{\supp}{\operatorname{supp}}
\newcommand{\lie}[1]{\mathfrak{#1}}\def\span{\textnormal{span}}
\def\m{\mathcal{M}}
\def\bb{\textbf{B}}
\def\cs{\mathcal{S}}
\def\cp{\mathcal{P}}
\begin{document}

\title[Integral bases for the universal enveloping algebras of map superalgebras]
{Integral bases for the universal enveloping algebras of map superalgebras}

\author{Irfan Bagci}
\address{Department of Mathematics \\
University of North Georgia \\
Oakwood, GA 30566}
\email{irfan.bagci@ung.edu}

\author{Samuel Chamberlin}
\address{Computer Science and Mathematics Department\\
Park University\\
Parkville, MO 64152.}
\email{samuel.chamberlin@park.edu}

\begin{abstract}
Let $\fg$ be a finite dimensional complex simple classical Lie superalgebra and $A$ be a commutative, associative algebra with unity over $\bbc$. In this paper we define an integral form for the universal enveloping algebra of the map superalgebra $\fg\otimes A$, and exhibit an explicit integral basis for this integral form.
\end{abstract}
\maketitle
\section{Introduction}

\noindent In 1955, Chevalley investigated integral forms for the classical Lie algebras. This led to the construction of the classical Chevalley groups. Integral forms for the universal enveloping algebras of the classical Lie algebras are necessary to understand the representation theory of these groups. Cartier and Kostant independently found these integral forms in 1966, \cite{K}. The best way to work with these integral forms is via their integral bases (a $\Z$--basis for the integral form). In order to construct these integral bases it is necessary to write particular elements in Poincar\'{e}-Birkhoff-Witt (PBW) order. This is done via straightening identities in the universal enveloping algebra. Once Cartier and Kostant obtained their integral forms it was possible to study Lie groups and Lie algebras over a field of positive characteristic. This generalized Chevalley's groups, and led to representation theory over a field of positive characteristic, \cite{H}.

Serre showed that using only the Cartan matrix one can present a classical Lie algebra by generators and relations. This led to the development of Kac-Moody Lie algebras, which arise using generalized Cartan matrices and Serre's presentation. The simple affine Lie algebras are of this type and are structurally similar to the classical Lie algebras. These (untwisted) simple affine Lie algebras are central extensions of loop algebras. In 1978, Garland extended the theory of integral forms to loop algebras and then to these affine Lie algebras. Using a Chevalley--type bases for these affine Lie algebras he gave integral bases for these integral forms. The required straightening identities in the universal enveloping algebras were considerably more complicated. In 1983, Mitzman extended these results to all simple affine Lie algebras, \cite{M}. Integral forms for the twisted affine Lie algebras were further explored by Fisher-Vasta, \cite{FV}. Integral forms for the quantized universal enveloping algebra associated to a simple finite-dimensional Lie algebra given by Lusztig, \cite{L}. Beck, Chari and Pressley developed integral forms for the quantized universal enveloping algebra associated to an affine Lie algebra, \cite{BCP,CP}.

In 2001, Chevalley bases for the simple classical Lie superalgebras (excluding types $P(n)$ and $Q(n)$) were constructed by Iohara and Koga, \cite{IK}. For type $Q(n)$ Chevalley bases were first given by Brundan and Kleshchev, \cite{BK}. Chevalley bases for type $P(n)$ and a unified treatment of these bases was given by Fioresi and Gavarini in 2012, \cite{FG}. Integral forms and integral bases for the universal enveloping algebras of classical Lie superalgebras first appeared in \cite{IK}. Later they were given by Shu and Wang, \cite{SW}. Again a unified treatment of the subject was given by Fioresi and Gavarini, \cite{FG}. Integral bases for the general linear Lie superalgebra $\lie{gl}(m,n)$ were given by Brundan and Kujawa, \cite{BKu}.

Recently there has been a great deal of interest in map (super)algebras and their representations, \cite{CFK, NSS, Sav}. Given a Lie (super)algebra $\fg$ and any commutative associative complex algebra $A$ the associated map (super)algebra is the Lie (super)algebra $\fg\otimes_\bbc A$ with bracket given by linearly extending the following bracket
$$\left[z\otimes a,z'\otimes a'\right]:=[z,z']_{\fg}\otimes aa',\ z,z'\in\fg,\ a,a'\in A$$

Since the loop (super)algebras are simply the map (super)algebras for which $A=\C[t,t^{-1}]$ it is natural to generalize Garland's work on integral forms and integral bases for the classical loop algebras to the map (super)algebras $\fg\otimes A$, where $\fg$ is a simple classical Lie (super)algebra. Suitable integral forms and integral bases for the classical map algebras were recently obtained by Chamberlin, \cite{C}. The aim of this paper is to give integral forms and integral bases for the map superalgebras $\fg\otimes A$, where $\fg$ is a simple classical Lie superalgebra. We have done this via straightening identities in the universal enveloping algebras. Many of these identities were previously unknown.

In 2007, Jakeli\'{c} and Moura used Garland and Mitzman's work on integral forms to study representations of affine Lie algebras over a field of positive characteristic, \cite{JM}. One application of this work will be to study representations for the map superalgebras $\fg\otimes A$, where $\fg$ is a simple classical Lie superalgebra, over a field of positive characteristic. Another application, which we plan to do in a future work, is to use our straightening identities to illuminate the structure of Weyl modules for the map (super)algebras $\fg\otimes A$ where, $\fg$ is a simple classical Lie (super)algebra.

Our paper is organized as follows:  In Section 2 we fix some notation and review briefly basic facts about classical Lie superalgebras , map superalgebras and record the properties we are going to need in the rest of the paper. Then in Section 3 we state the main theorem of the paper and give some important corollaries.  Next in Section 4 we state and prove all of the necessary straightening identities. In Section 5 we prove the main result of the paper and give a triangular decomposition of our integral form. Finally, in Section 6 we give an example of our integral forms and bases.

\textbf{Acknowledgement}: We would like to thank Vyjayanthi Chari for pointing out \cite{BCP,CP,L}. We would also  like to thank the anonymous referee for useful comments and corrections.

\section{Notation and Preliminaries}

\subsection{}

If $\mathcal{A}$ is an algebra, over a field $\bbf$ of characteristic 0, define an integral form $\mathcal{A}_\bbz$ of $\mathcal{A}$ to be a $\bbz$-algebra such that $\mathcal{A}_\bbz\otimes_\bbz\bbf=\mathcal{A}$. An integral basis for $\mathcal{A}$ is a $\bbz$-basis for $\mathcal{A}_\bbz$. The following notation will be used throughout this manuscript: $\C$ is the set of complex numbers, $\Z_{\geq0}$ is the set of non-negative integers, and $\Z_{>0}$ is the set of positive integers. All vector spaces and algebras we consider will be over the ground field $\C$. A Lie superalgebra is a finite dimensional $\Z_2$-graded vector space $\fg=\fg_{0}\oplus \fg_{{1}}$ with a bracket $[ , ] : \fg\otimes \fg \rightarrow  \fg$ which preserves the $\Z_2$-grading
and satisfies graded versions of the operations used to define Lie algebras.   The even part  $\fg_{0}$ is a Lie algebra under the bracket.

Given any Lie superalgebra $\fg$ let $\bu(\fg)$ be the universal enveloping superalgebra of $\fg$.  $\bu(\fg)$ admits a PBW type basis and if   $x_1, \cdots, x_m$ is  a basis of $\fg_{0}$ and $y_1, \dots, y_n$ is a basis of $\fg_{1}$, then the elements
$$x_1^{i_1} \dots x_m^{i_m}y_1^{j_1}\dots y_n^{j_n}\ \  \text{with} \ \ i_1, \dots, i_m \geq 0  \ \ \text{and} \ \ j_1, \dots, j_n \in \{0, 1\}$$
form a basis of the universal enveloping algebra $\bu(\fg)$. Given $u\in\bu(\fg)$ and $r\in\Z_{\geq0}$ define
$$u^{(r)}:=\frac{u^r}{r!}\textnormal{ and }\binom{u}{r}:=\frac{u(u-1)\cdots(u-r+1)}{r!}.$$
Define $T^0(\fg):=\C$, and for all $j\geq1$, define $T^j(\fg):=\fg^{\otimes j}$, $T(\fg):=\bigoplus_{j=0}^\infty T^j(\fg)$, and $T_j(\fg):=\bigoplus_{k=0}^jT^k(\fg)$. Then set $\bu_j(\fg)$ to be the image of $T_j(\fg))$ under the canonical surjection $T(\fg)\to\bu(\fg)$. Then for any $u\in\bu(\fg)$ \emph{define the degree of $u$} by $$\deg u:=\min_{j}\{u\in\bu_j(\fg)\}$$

\subsection{}
In 1977 V. Kac provided a complete classification of simple Lie superalgebras (cf. \cite{Kac}). The simple finite-dimensional Lie superalgebras are divided into two types based on their even part: they are either classical (when $\fg_{0}$ is reductive) or of Cartan type (otherwise).
%A simple Lie superalgebra $\fg=\fg_0\oplus\fg_1$ is called \emph{classical} if it is even part $\fg_0$ is a reductive Lie algebra.

Simple classical Lie superalgebras over $\C$ are either isomorphic to a simple Lie algebra or to one of the following Lie superalgebras:
$$A(m,n), m\geq n\geq 0, m+ n\geq 0; \  B(m,n), m\geq 0, n\geq 1 ; \  C(n), n\geq  3;$$
$$D(m,n), m\geq 2, n\geq 1; \  \ P(n), n\geq 2; \  \  Q(n), n\geq 2; $$
$$ F(4); \  G(3); \  \  \text{and} \ \ D(2,1;a), a \in \C - \{0, -1\}.$$

Fix $\fg=\fg_0\oplus\fg_1$ a classical Lie superalgebra. A Cartan subalgebra $\fh$ of $\fg$ coincides with the Cartan subalgebra of $\fg_{0}$.  Fix a Cartan subalgebra $\fh$ of $\fg_{0}$.  A Cartan subalgebra of $\fg$ is diagonalizable. Therefore we have a root decomposition
$$\fg= \bigoplus_{\alpha \in \fh^{\ast}}\fg_{\alpha},$$
where $\fg_{\alpha}:=\{ x \in \fg \mid [h,x] = \alpha(h)x  \ \ \text{for all } \ \  h \in \fh\}$. The set $R :=\{x \in \fg \mid \fg_{\alpha} \neq 0\}$ is called the root system. The $\Z_2$-grading of $\fg$ determines a decomposition of $R$ into the disjoint union of the even roots $R_0$  and the odd roots $R_1$, where $R_0$ is the root system of $\fg_0$ and $R_1$ is the system of weights of the representation of $\fg_0$ in $\fg_1$.
We record some  properties of  the root system of $\fg$ in the following Proposotion for the rest of the paper. For further  information and details we refer the reader to \cite{BK, Kac, Ser}.

\begin{prop}\label{roots} \cite{BK, Kac, Ser}
Let $\fg$ be a classical Lie superalgebra and let  $\fg= \oplus_{\alpha \in \fh^{\ast}}\fg_{\alpha}$ be its root decomposition relative to $\fh$.
\begin{itemize}
\item[(a)] If $\fg \not\in  \{A(1, 1), P(3), Q(n)\}$ then
\begin{itemize}
\item[(i)] $\dim \fg_{\alpha} =1 $ for each $\alpha \in R$.
\item[(ii)]$[{\fg}_{\alpha}, \fg_{\beta}] \neq 0$ if and only if  $\alpha, \beta , \alpha + \beta \in R \cup \{0\}$.
\item[(iii)] If $\alpha$ is in $R$ (or $R_0$,  or $R_1$), then so is $-\alpha$.
\end{itemize}
\item[(b)] $c\alpha \in R$ for $\alpha \neq 0, c \neq \pm 1$ if and only if $\alpha \in R_1, c= \pm 2$.
\item[(c)] If $\fg =A(1,1)$, then $\dim \fg_{\alpha} = 2$ for  $\alpha \in R_1$ and  $\dim \fg_{\alpha} =1 $ for $\alpha \in R_0$.
\item[(d)] If $\fg = Q(n)$, then $\dim (\fg_{\alpha} \cap \fg_0)= dim (\fg_{\alpha} \cap \fg_1) =2$.
%\item[(3)]
%\item[(4)]
\end{itemize}
\end{prop}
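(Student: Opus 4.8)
The statement collects standard structural facts about the root systems of classical Lie superalgebras, all traceable to Kac's classification, so the plan is not to prove a single new assertion but to verify each item in a uniform way and to flag exactly where the listed exceptions force separate treatment. First I would split the classical superalgebras into the \emph{basic} series $A(m,n), B(m,n), C(n), D(m,n), F(4), G(3), D(2,1;a)$, which admit a non-degenerate even invariant supersymmetric bilinear form $\langle\,,\,\rangle$ on $\fg$, and the \emph{strange} series $P(n)$ and $Q(n)$, which do not. For the basic series essentially everything follows from properties of the form together with the $\Z$-grading by $\fh^*$; for the strange series one must fall back on the explicit matrix realizations.

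For the basic series the easy half of (a)(ii) is immediate from the grading: $[\fg_\alpha,\fg_\beta]\subseteq\fg_{\alpha+\beta}$, so a nonzero bracket forces $\fg_\alpha,\fg_\beta,\fg_{\alpha+\beta}$ all nonzero, i.e. $\alpha,\beta,\alpha+\beta\in R\cup\{0\}$. The symmetry (a)(iii) follows because $\langle\,,\,\rangle$ pairs $\fg_\alpha$ with $\fg_{-\alpha}$ and is zero on $\fg_\alpha\times\fg_\beta$ unless $\alpha+\beta=0$; non-degeneracy then forces $\fg_{-\alpha}\neq 0$ whenever $\fg_\alpha\neq 0$, and the same argument separately inside $\fg_0$ and $\fg_1$ gives the claim for $R_0$ and $R_1$. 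The harder half of (a)(ii), the non-vanishing when $\alpha+\beta\in R\cup\{0\}$, is where I would use invariance in the form $\langle[x,y],z\rangle=\pm\langle x,[y,z]\rangle$: given nonzero $x\in\fg_\alpha$, $y\in\fg_\beta$ one chooses $z\in\fg_{-\alpha-\beta}$ and reduces non-vanishing of $[x,y]$ to non-vanishing of a paired bracket, which is controlled by the $\sl_2$- or $\osp(1,2)$-triple attached to the relevant root. The multiplicity-one statement (a)(i) and the doubling statement (b) — that the only roots $c\alpha$ with $c\neq\pm1$ occur for $c=\pm2$ on odd roots $\alpha$ — I would then read directly off the explicit root-space decomposition of each basic series, where the sole non-reduced behaviour comes from an odd root $\alpha$ with $2\alpha\in R_0$.

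The remaining items (c), (d), and the parts of (a) for $P(n)$ with $n\neq 3$, must be handled in the matrix models, and this is where the main obstacle lies. For $A(1,1)$ and $Q(n)$ the dimension counts (c) and (d) are obtained simply by listing the root vectors in the standard realization and observing the multiplicity jump. The genuinely delicate case is $P(n)$: it carries no invariant form, so neither the pairing argument for (a)(iii) nor the invariance argument for the non-vanishing in (a)(ii) is available, and both must instead be checked by computing brackets of the explicit root vectors of $P(n)$ by hand. The exceptional low-rank coincidences $A(1,1)$ and $P(3)$ are excluded from (a) precisely because there the generic multiplicity-one and bracket arguments break down, so the plan is to treat them as isolated computations rather than to force them into the uniform scheme. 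I expect the bookkeeping for $P(n)$, keeping track of signs and of which root-vector brackets survive in the absence of an invariant form, to be the most laborious step; the basic series, by contrast, should follow cleanly from the form.
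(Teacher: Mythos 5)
The paper offers no proof of Proposition \ref{roots}: it is quoted directly from \cite{BK, Kac, Ser}, so your reconstruction can only be measured against those sources, whose overall strategy (invariant bilinear form for the basic series, explicit matrix realizations for the strange series $P(n)$, $Q(n)$) you follow correctly in outline. For the basic series your sketch is sound: the $\fh^*$-grading gives the easy half of (a)(ii), the nondegenerate pairing of $\fg_\alpha$ with $\fg_{-\alpha}$ gives (a)(iii), invariance together with $\mathfrak{sl}_2$- and $\mathfrak{osp}(1,2)$-theory gives the hard half of (a)(ii), and (a)(i), (b) together with the exceptional counts (c), (d) are read off from the explicit realizations.

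The genuine gap is in your disposal of $P(n)$. You claim that, since $P(n)$ carries no invariant form, (a)(ii) and (a)(iii) for it ``must instead be checked by computing brackets of the explicit root vectors of $P(n)$ by hand,'' and you treat this as laborious but routine. For (a)(iii) that computation cannot succeed, because the statement is false for the entire $P$-series, not only for $P(3)$: in the standard realization the odd part of $P(n)$ consists of a symmetric block and a skew-symmetric block; the symmetric block contributes the odd roots $\epsilon_i+\epsilon_j$ for all $i\le j$ --- in particular $2\epsilon_i$ --- while the skew block contributes only $-(\epsilon_i+\epsilon_j)$ with $i<j$. Hence $2\epsilon_i\in R_1$ while $-2\epsilon_i\notin R$, so $R\neq -R$ for every $P(n)$, $n\geq 2$. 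Any correct treatment must therefore either exclude the whole $P$-series from (iii) or record the weaker true statement that symmetry fails exactly at the roots $2\epsilon_i$ (Serganova's generalized root systems, one of the cited sources, are symmetric by axiom, and $P(n)$ is the standard non-example). Note that the paper itself implicitly concedes the point: identity \eqref{xgamx-gam} is stated only for $\gamma\in R_1\cap(-R_1)$, a hypothesis that would be redundant if (a)(iii) held for all $\fg$ outside $\{A(1,1),P(3),Q(n)\}$. Your proposal, by asserting that the hand verification goes through, papers over the one place where the uniform statement actually breaks.
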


In order to make the treatment uniform for the remainder of this work $\fg\notin\{A(1, 1), P(3), Q(n)\}$. These types of Lie superalgebras have reindexed Chevalley bases, which take into account the greater dimensions of the odd root spaces given in $(c)$ and $(d)$ above and for $\fg=P(3)$. In these cases the statement of the Theorem \ref{thm} is the almost the same. We only need to change the index set for the roots, \cite{BK, FG, IK}.

\subsection{Notation}

Throughout rest of this work, unless otherwise noted, $\lie{g}$ will denote a fixed classical Lie superalgebra not of type $A(1,1)$, $P(3)$, nor $Q(n)$ with bracket $[\ ,\ ]_{\lie{g}}$. Fix a distinguished simple root system for $\fg$, $\Delta:=\{\alpha_1,\ldots,\alpha_l\}$ as in \cite{Kac}. Denote by  $R^+$, and $R^-$,  positive roots, negative roots respectively. For $j\in\{0,1\}$ define $R_j^\pm:=R^\pm\cap R_j$. Define $I:=\{1,\ldots,l\}$. Fix a Chevalley basis for $\fg$, $\{h_i\}_{i\in I}\cup\{x_{\alpha}\}_{\alpha\in R}$.

Fix a commutative associative unitary algebra $A$ over $\C$. $\bb$ will denote a fixed $\C$--basis of $A$. The \emph{map superalgebra} of $\lie{g}$ is the $\Z_2$-graded vector space  $\lie{g}\otimes A$, where   $(\lie{g}\otimes A)_0 := \lie{g}_0\otimes A $ and  $(\lie{g}\otimes A)_1:= \lie{g}_1\otimes A $, with bracket given by extending the bracket
$$[z\otimes a, z'\otimes b]:=[z,z']_{\lie{g}}\otimes ab,\ z,z'\in\fg,\ a,b\in A.$$
by linearity. $\lie{g}$ can be embedded in this Lie superalgebra as $\lie{g}\otimes 1$.

If $A$ is the coordinate ring of the scheme $X$ then the Lie superalgebra $\lie{g}\otimes A$ is the Lie superalgebra of regular functions on $X$ with values in $\lie{g}$ with point-wise bracket, \cite{Sav}.

Write the $\C$ basis of the Lie algebra $\lie{sl}_2$ as $\{x^-,h,x^+\}$.  For each $\alpha\in R$, let $\Omega_\alpha:\bu(h\otimes A)\to\bu(\fh\otimes A)$ be the algebra homomorphism defined by
$$h\otimes a\mapsto h_\alpha\otimes a$$

\subsection{Multisets and $p(\chi)$}

Given any set $S$ define a \emph{multiset of elements of $S$} to be a multiplicity function $\chi:S\to\Z_{\geq0}$. Given $\chi\in\f(S)$ define $\supp\chi:=\{s\in S: \chi(s)>0\}$. Define $\f(S):=\{\chi:S\to\Z_{\geq0}:|\supp\chi|<\infty\}$. For $\chi\in\f(S)$ define $|\chi|:=\sum_{s\in S}\chi(s)$. Notice that $\f(S)$ is an abelian monoid under function addition. Define a partial order on $\f(S)$ so that for
$\psi,\chi\in\f(S)$, $\psi\leq\chi$ if $\psi(s)\leq\chi(s)$ for all $s\in S$. Define $\f_k(S):=\{\chi\in\f(S):|\chi|=k\}$ and given $\chi\in\f(S)$ define $\f(S)(\chi):=\{\psi\in\f(S):\psi\leq\chi\}$ and $\f_k(S)(\chi):=\{\psi\in\f(S)(\chi):|\psi|=k\}$. In the case $S=A$ the $S$ will be omitted from the notation. So that $\f:=\f(A)$, $\f_k:=\f_k(A)$, $\f(\chi):=\f(A)(\chi)$ and $\f_k(\chi):=\f_k(A)(\chi)$.

If $\psi\in\f(\chi)$ we define $\chi-\psi$ by standard function subtraction. Also define functions $\pi:\f-\{0\}\to A$ by
$$\pi(\psi):=\prod_{a\in A}a^{\psi(a)}$$
and $\pi(0)=1$, and $\m:\f\to\Z$ by
$$\m(\psi):=\frac{|\psi|!}{\prod_{a\in A}\psi(a)!}$$
For all $\psi\in\f$, $\m(\psi)\in\Z$ because if $\supp\psi=\{a_1,\ldots,a_k\}$ then $\m(\psi)$ is the multinomial coefficient
$$\binom{|\psi|}{\psi(a_1),\ldots,\psi(a_k)}$$

For $s\in S$ define $\chi_s$ to be the characteristic function of the set $\{s\}$. Then for all $\chi\in\f(S)$
$$\chi:=\sum_{s\in S}\chi(s)\chi_s$$

Given $\alpha\in R$ and $S\subset A$ define $X_\alpha:\f(S)\to\bu(\fg\otimes A)$ by
$$X_\alpha(\chi):=\prod_{a\in\supp\chi}\left(x_\alpha\otimes a\right)^{(\chi(a))}$$

%\subsection{Definition of $p(\chi)$}

Given $\chi\in\f$, recursively define functions $p:\f\to\bu(h\otimes A)$ by $p(0):=1$ and for $\chi\in\f-\{0\}$,
\begin{eqnarray*}
p(\chi)&:=&-\frac{1}{|\chi|}\sum_{\psi\in\f(\chi)-\{0\}}\m(\psi)\left(h\otimes\pi(\psi)\right)p(\chi-\psi)
\end{eqnarray*}

For all $\alpha\in R$, define $p_\alpha(\chi):=\Omega_\alpha(p(\chi))$ and $p_i(\chi):=p_{\alpha_i}(\chi)$.

\begin{rem}
\begin{enumerate}
\item[(1)] The $p_\alpha(\chi)$ are a generalization of Garland's $\Lambda_k(H_\alpha(r))$ because
$p_{-\alpha}\left(k\chi_{t^r}\right)=\Lambda_{k-1}(H_\alpha(r))$, \cite{Gar} page 502.\\

\item[(2)] Given $\alpha\in R$ and $\psi\in\f$ \\ $X_\alpha\left(|\psi|\chi_1\right)X_{-\alpha}(\psi)\equiv(-1)^{|\psi|}p_\alpha(\psi)\mod(\bu(\fg\otimes A)(x_\alpha\otimes A))$, \cite{C} Lemma 5.4.
\end{enumerate}
\end{rem}

%Note that for $a,b,c\in A$,
%\begin{eqnarray*}
%p_\alpha\left(\chi_a\right)&=&-\left(h_\alpha\otimes a\right)\\
%p_\alpha\left(2\chi_a\right)&=&\frac{1}{2}\left(h_\alpha\otimes %a\right)^2-\frac{1}{2}\left(h_\alpha\otimes a^2\right)\\
%p_\alpha\left(\chi_a+\chi_b\right)&=&\left(h_\alpha\otimes a\right)\left(h_\alpha\otimes %b\right)-\left(h_\alpha\otimes ab\right)\\
%p_\alpha\left(\chi_a+\chi_b+\chi_c\right)&=&-\left(h_\alpha\otimes a\right)\left(h_\alpha\otimes b\right)\left(h_\alpha\otimes c\right)+\left(h_\alpha\otimes a\right)\left(h_\alpha\otimes bc\right)+\left(h_\alpha\otimes b\right)\left(h_\alpha\otimes ac\right)+\left(h_\alpha\otimes c\right)\left(h_\alpha\otimes ab\right)-2\left(h_\alpha\otimes abc\right)
%\end{eqnarray*}
We record some basic properties of  $p_\alpha(\chi) $ for the rest of the paper.
\begin{prop}\label{degp}
Let $\alpha,\beta\in R$, $\chi,\varphi\in\f$, and $a\in A$. Then
\begin{enumerate}
\item  $p_\alpha\left(\chi_a\right)=-\left(h_\alpha\otimes a\right)$\label{palpha1}\\

%\item [(2)]  $p_\alpha\left(2\chi_a\right)=\frac{1}{2}\left(h_\alpha\otimes a\right)^2-\frac{1}{2}\left(h_\alpha\otimes a^2\right)$\\

%\item  [(3)]  $p_\alpha\left(\chi_a+\chi_b\right)=\left(h_\alpha\otimes a\right)\left(h_\alpha\otimes b\right)-\left(h_\alpha\otimes ab\right)$\\

%p_\alpha\left(\chi_a+\chi_b+\chi_c\right)&=&-\left(h_\alpha\otimes a\right)\left(h_\alpha\otimes %b\right)\left(h_\alpha\otimes c\right)+\left(h_\alpha\otimes a\right)\left(h_\alpha\otimes %bc\right)+\left(h_\alpha\otimes b\right)\left(h_\alpha\otimes ac\right)+\left(h_\alpha\otimes %c\right)\left(h_\alpha\otimes ab\right)-2\left(h_\alpha\otimes abc\right)

\item $p_\alpha(\chi)=(-1)^{|\chi|}\prod_{a\in A}(h_\alpha\otimes a)^{(\chi(a))}+ \textnormal{ elements of }\bu(h_\alpha\otimes A)\textnormal{ of degree less than }|\chi|$\label{degpalpha}\\

\item $p_\alpha(\chi)p_\beta(\varphi)=p_\beta(\varphi)p_\alpha(\chi)$\label{palphapbeta}
\end{enumerate}
\end{prop}
\begin{proof}
%(1)-(3) can be calculated from the definition.
\eqref{palpha1} can be calculated from the definition. To show \eqref{degpalpha} we proceed by induction on $|\chi|$. If $|\chi|=1$ use \eqref{palpha1}. For the inductive step we have for all $\alpha\in R$ and $\chi\in\f$
\begin{eqnarray*}
-|\chi|p_\alpha(\chi)&=&\sum_{\substack{\psi\in\f(\chi)\\|\psi|>1}}\m(\psi)\left(h_\alpha\otimes\pi(\psi)\right)p_\alpha(\chi-\psi)
+\sum_{c\in\supp\chi}\left(h_\alpha\otimes c\right)p_\alpha(\chi-\chi_c)\\
&=&\sum_{c\in\supp\chi}\left(h_\alpha\otimes c\right)(-1)^{|\chi|-1}\prod_{a\in A}(h_\alpha\otimes a)^{((\chi-\chi_c)(a))}\\
&+&\textnormal{ elements of }\bu(h_\alpha\otimes A)\textnormal{ of degree less than }|\chi|\hskip.1in(\textnormal{by the induction hypothesis})\\
&=&-|\chi|(-1)^{|\chi|}\prod_{a\in A}(h_\alpha\otimes a)^{(\chi(a))}+\textnormal{ elements of }\bu(h_\alpha\otimes A)\textnormal{ of degree less than }|\chi|
\end{eqnarray*}
\eqref{palphapbeta} holds because $p_\alpha(\chi)$ and $p_\beta(\varphi)$ are in the center of $\bu(\fg\otimes A)$.
\end{proof}

\begin{rem}\label{hbasis}
Note that proposition \ref{degp}\eqref{degpalpha} tells us that if $B$ is a $\C$-basis for $A$ then $\left\{p_\alpha(\chi):\chi\in\f(B)\right\}$ is a $\C$-basis for $\bu\left(\left\{h_\alpha\right\}\otimes A\right)$.
\end{rem}

\section{An integral form and integral basis}

In this section we define our integral form and state its integral basis as a theorem. We also give an important corollary to this theorem.

\subsection{}
Assume that $A$ has a basis $\bb$, which is closed under multiplication. Then our integral form is defined as follows
\begin{defn}
Define $\bu_\Z(\fg\otimes A)$, to be the $\Z$-subalgebra of $\bu(\fg\otimes A)$ generated by
$$\left(x_\alpha\otimes b\right)^{(s)},(x_\gamma\otimes c),p_i(\chi):\alpha\in R_0,\ \gamma\in R_1,\ b,c\in\bb,\ s\in\Z_{\geq0},\ i\in I,\ \chi\in\f(\bb).$$
\end{defn}

\begin{rem}
Proposition \ref{degp}\eqref{degpalpha} implies that $\bu_\Z(\fg\otimes A)$ is an integral form for $\bu(\fg\otimes A)$.
\end{rem}

Let
$$M:=\left\{\left(x_\alpha\otimes b\right)^{(s)},(x_\gamma\otimes c),p_i(\chi):\alpha\in R_0,\ \gamma\in R_1,\ b,c\in\bb,\ s\in\Z_{\geq0},\ i\in I,\ \chi\in\f(\bb)\right\}$$

Define a \emph{monomial} in $\bu_\Z(\fg\otimes A)$ to be any finite product of elements of the set $M$. Given a monomial $m$ its \emph{factors} are elements of $M$ appearing in $m$.

The goal of this paper is to prove the following theorem, which is the super-version of Theorem 3.2 in \cite{C}.

\begin{thm}\label{thm}
The $\Z$-superalgebra $\bu_\Z(\fg\otimes A)$ is a free $\Z$-module. Let $\left(\preccurlyeq,R\cup I\right)$ be a total order. Then a $\Z$ basis of $\bu_\Z(\fg\otimes A)$ is given by the set $\mathcal{B}$ of all products (without repetitions) of elements of the set
$$\left\{X_\alpha(\chi_\alpha),p_i(\phi_i),X_\gamma(\psi_\gamma)\ |\ \alpha\in R_0,\ i\in I,\ \gamma\in R_1,\ \phi_i,\chi_\alpha,\psi_\gamma\in\f(\bb),\ \psi_\gamma(\bb)\subset\{0,1\}\right\}$$
taken in the order given by $\left(\preccurlyeq,R\cup I\right)$.
\end{thm}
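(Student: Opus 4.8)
The plan is to prove Theorem~\ref{thm} by establishing two things: that the proposed set $\mathcal{B}$ spans $\bu_\Z(\fg\otimes A)$ over $\Z$, and that it is $\Z$-linearly independent. For spanning, I would argue that every monomial (finite product of elements of $M$) can be rewritten as a $\Z$-linear combination of elements of $\mathcal{B}$. The main tool here is the collection of straightening identities promised in Section~4; the idea is to take an arbitrary monomial and, using these identities, push its factors into the prescribed order dictated by $(\preccurlyeq, R\cup I)$, while simultaneously removing repetitions. Specifically, for each fixed root $\alpha\in R_0$ one must collect all factors $(x_\alpha\otimes b)^{(s)}$ into a single $X_\alpha(\chi_\alpha)$; this uses the fact that the divided powers $(x_\alpha\otimes b)^{(s)}$ for a fixed even $\alpha$ commute (since $[x_\alpha, x_\alpha]=0$ for even roots) and that products of divided powers of commuting elements expand integrally into the $X_\alpha$ form. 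For odd $\gamma\in R_1$, since $y^2$ may be nonzero but $(x_\gamma\otimes c)$ are the generators with $\psi_\gamma(\bb)\subset\{0,1\}$, one uses the constraint that in the PBW basis odd generators appear to power at most one, together with straightening to handle squares via the bracket $[x_\gamma, x_\gamma]$ landing in $\fg_0\otimes A^2$.

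The genuinely hard part, and where I expect the straightening identities to do the heavy lifting, is reordering factors associated to \emph{different} roots, and in particular commuting a ``lowering'' factor past a ``raising'' factor. When one commutes $X_\alpha(\chi)$ past $X_\beta(\varphi)$ for $\alpha,\beta$ with $\alpha+\beta\in R\cup\{0\}$, the bracket produces new terms supported on the root $\alpha+\beta$ and on the Cartan (giving rise to the $p_i(\phi_i)$ factors). The crucial point for integrality is that these commutators must expand with \emph{integer} coefficients into monomials of the allowed form; this is precisely the content of the straightening identities, and it is exactly why the $p_\alpha(\chi)$ were introduced rather than raw products of $h_\alpha\otimes a$ (cf.\ the Garland-style normalization in Remark following Proposition~\ref{degp}). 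I would organize the spanning argument as an induction, ordering monomials by a suitable complexity measure — for instance a lexicographic combination of total degree (via $\deg$ from Section~2.1) and a count of ``inversions'' relative to $\preccurlyeq$ — and show each straightening move strictly decreases this measure modulo already-ordered terms, so the process terminates.

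For linear independence, the strategy is to pass to the classical PBW basis of $\bu(\fg\otimes A)$ over $\C$ and use Proposition~\ref{degp}\eqref{degpalpha} together with Remark~\ref{hbasis}. Each basis element $X_\alpha(\chi_\alpha)$ expands, by the standard divided-power computation, as $\prod(x_\alpha\otimes a)^{(\chi_\alpha(a))}$ plus lower-degree corrections, and by Remark~\ref{hbasis} the $p_i(\phi_i)$ form a $\C$-basis of $\bu(\{h_i\}\otimes A)$. Thus the leading term (in the degree filtration) of each element of $\mathcal{B}$ is a distinct PBW monomial in the $(x_\alpha\otimes a)$ and $(h_i\otimes a)$, so a triangularity argument with respect to the degree filtration shows the elements of $\mathcal{B}$ have linearly independent leading terms and hence are $\C$-linearly independent; since they lie in $\bu_\Z(\fg\otimes A)$ and span it over $\Z$, they form a $\Z$-basis and $\bu_\Z(\fg\otimes A)$ is free.

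I would be careful about two technical wrinkles. First, the order $\preccurlyeq$ is on $R\cup I$, so the Cartan factors $p_i(\phi_i)$ are interleaved among the root factors rather than collected in the middle; the reordering must respect this, and commuting a $p_i(\phi_i)$ past a root factor is controlled by the straightening identities that describe how $p_\alpha(\chi)$ interacts with $x_\beta\otimes b$ (the analogue of Garland's $\binom{H}{k}$ relations). Second, the super signs: commuting two odd factors introduces a sign, and brackets of odd elements are anticommutators; I must verify that these signs never obstruct integrality and that the resulting coefficients remain in $\Z$. Granting the Section~4 identities, both wrinkles are bookkeeping, and the essential mathematical obstacle is confirming that every straightening step stays inside $\bu_\Z(\fg\otimes A)$ with integer coefficients — which is exactly what those identities are designed to guarantee.
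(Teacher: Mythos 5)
Your proposal is correct and follows essentially the same route as the paper: spanning is proved by induction on degree, using the Section~4 straightening identities (together with the consolidation identities \eqref{xbxb}, \eqref{xgam^2}, and Lemma~\ref{degpi}) to reorder and merge factors modulo lower-degree terms, and linear independence follows from the PBW theorem for Lie superalgebras via the leading-term statement Proposition~\ref{degp}\eqref{degpalpha} and Remark~\ref{hbasis}. Your explicit lexicographic measure (degree, then inversion count) and your spelled-out triangularity argument are just more detailed versions of what the paper does implicitly, so there is no substantive difference in approach.
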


\begin{rem}
\begin{enumerate}
\item In the case where $2\gamma\in R_0$ for some $\gamma\in R_1$ we will also need a total order $\precsim$ on the basis $\bb$ for $A$ in order to state our integral basis for $\bu_\bbz(\fg\otimes A)$ because in this case $(x_\gamma\otimes a)$ and $(x_\gamma\otimes b)$ do not commute. In this case the products in $\B$ will need to first be taken in the order $\left(\preccurlyeq,R\cup I\right)$ and then in the order given by $(\precsim,\bb)$.\label{bborder}\\

\item In the cases $\fg\in\{A(1,1),P(3),Q(n)\}$ the statement of Theorem \ref{thm} must be changed so that we no longer index by $R$ but instead use a larger index set. Other than the new index set this theorem is the same in these cases, \cite{BK, FG, IK}.
\end{enumerate}
\end{rem}

\begin{cor} We have the following isomorphism of $\Z$-modules
$$\bu_\Z(\fg\otimes A)  \cong  \bu_\Z(\fg_0\otimes A) \otimes_{\Z} \Lambda_{\Z}(\fg_1 \otimes A),$$
where  $\Lambda_{\Z}(\fg_1 \otimes A)$ denotes the exterior $\Z$-algebra over $\fg_1 \otimes A$.
\end{cor}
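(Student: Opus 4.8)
The plan is to extract both sides of the asserted isomorphism directly from the explicit $\Z$-basis $\mathcal{B}$ furnished by Theorem \ref{thm}, and then to match them. By that theorem every element of $\mathcal{B}$ is a product, taken in the order $(\preccurlyeq, R\cup I)$, of three kinds of factors: even root factors $X_\alpha(\chi_\alpha)$ with $\alpha\in R_0$, Cartan factors $p_i(\phi_i)$ with $i\in I$, and odd root factors $X_\gamma(\psi_\gamma)$ with $\gamma\in R_1$ and $\psi_\gamma(\bb)\subset\{0,1\}$. Hence each such element is determined by, and determines, the data $(\{\chi_\alpha\}_{\alpha\in R_0},\{\phi_i\}_{i\in I},\{\psi_\gamma\}_{\gamma\in R_1})$ of functions in $\f(\bb)$, the $\psi_\gamma$ being $\{0,1\}$-valued. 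The crux is that the even/Cartan part of this data and the odd part are independent, and that each part parametrizes a $\Z$-basis of one of the two tensor factors.

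First I would pin down the even factor. The generators of $\bu_\Z(\fg\otimes A)$ indexed only by $R_0$ and $I$, namely the $(x_\alpha\otimes b)^{(s)}$ with $\alpha\in R_0$ and the $p_i(\chi)$ with $i\in I$, are precisely the defining generators of Chamberlin's integral form $\bu_\Z(\fg_0\otimes A)$ of the map algebra of the reductive Lie algebra $\fg_0$, whose root system is $R_0$ and whose Cartan subalgebra is the common $\fh$; in particular $\bu_\Z(\fg_0\otimes A)$ is a $\Z$-subalgebra of $\bu_\Z(\fg\otimes A)$. Applying the even analog of Theorem \ref{thm}, that is Theorem 3.2 of \cite{C}, with the total order $(\preccurlyeq, R_0\cup I)$ obtained by restriction, one obtains that the ordered products of the $X_\alpha(\chi_\alpha)$ ($\alpha\in R_0$) and $p_i(\phi_i)$ ($i\in I$) form a $\Z$-basis of $\bu_\Z(\fg_0\otimes A)$, indexed exactly by $(\{\chi_\alpha\}_{\alpha\in R_0},\{\phi_i\}_{i\in I})$.

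Next I would pin down the odd factor. Since $\fg\notin\{A(1,1),P(3),Q(n)\}$, Proposition \ref{roots}(a) gives $\dim\fg_\gamma=1$ for every $\gamma\in R_1$, so the Chevalley basis endows $\fg_1$ with the $\Z$-lattice $\bigoplus_{\gamma\in R_1}\Z\,x_\gamma$ and hence makes $\fg_1\otimes A$ a free $\Z$-module with basis $\{x_\gamma\otimes a:\gamma\in R_1,\ a\in\bb\}$. A $\Z$-basis of the exterior algebra $\Lambda_\Z(\fg_1\otimes A)$ is then the set of wedges of distinct such generators taken in a fixed total order; reading these in the order $(\preccurlyeq, R_1)$, refined when $2\gamma\in R_0$ by a secondary order $(\precsim,\bb)$ exactly as in the remark following Theorem \ref{thm}, they become the products $X_\gamma(\psi_\gamma)$ with $\gamma\in R_1$ and $\psi_\gamma(\bb)\subset\{0,1\}$ (here $(x_\gamma\otimes a)^{(1)}=x_\gamma\otimes a$), indexed exactly by $\{\psi_\gamma\}_{\gamma\in R_1}$.

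Finally I would assemble the map. The assignment sending the element of $\mathcal{B}$ with data $(\{\chi_\alpha\},\{\phi_i\},\{\psi_\gamma\})$ to the elementary tensor of the $\bu_\Z(\fg_0\otimes A)$-basis element with data $(\{\chi_\alpha\},\{\phi_i\})$ and the $\Lambda_\Z(\fg_1\otimes A)$-basis element with data $\{\psi_\gamma\}$ is, by the three paragraphs above, a bijection from $\mathcal{B}$ onto the canonical $\Z$-basis of $\bu_\Z(\fg_0\otimes A)\otimes_\Z\Lambda_\Z(\fg_1\otimes A)$; extending it $\Z$-linearly yields the desired isomorphism of $\Z$-modules. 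I expect the one delicate point to be purely the bookkeeping of the total order: one must check that restricting $\preccurlyeq$ gives legitimate orders on each factor and that, when $2\gamma\in R_0$, incorporating the secondary order $\precsim$ on $\bb$ makes the odd factors index a genuine basis of the exterior algebra. It is worth emphasizing that the statement is only a $\Z$-module isomorphism and not an algebra isomorphism: the odd generators need not square to zero in $\bu_\Z(\fg\otimes A)$, since $(x_\gamma\otimes a)^2=\frac{1}{2}[x_\gamma,x_\gamma]\otimes a^2$ is nonzero whenever $2\gamma\in R_0$, so $\Lambda_\Z(\fg_1\otimes A)$ is not realized as a subalgebra and no multiplication map is being claimed.
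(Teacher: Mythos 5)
Your proposal is correct and takes essentially the same approach as the paper: the paper likewise uses the basis $\mathcal{B}$ of Theorem \ref{thm}, applies that theorem to $\fg_0\otimes A$ to identify the even/Cartan factors with a $\Z$-basis of $\bu_\Z(\fg_0\otimes A)$, identifies the odd products $X_\gamma(\tau_\gamma)$ with $\tau_\gamma(b)\leq 1$ with a $\Z$-basis of $\Lambda_\Z(\fg_1\otimes A)$, and concludes by matching bases. The only cosmetic difference is that the paper fixes the order $R_0^-\preccurlyeq I\preccurlyeq R_0^+\preccurlyeq R_1$ so that each basis monomial literally factors as an even part times an odd part, whereas you match indexing data for an arbitrary order.
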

\begin{proof}
Choose a basis $\mathcal{B}$ of $\bu_\Z(\fg\otimes A)$ as in Theorem \ref{thm} where $\preccurlyeq$ is such that $R_0^-\preccurlyeq I\preccurlyeq R_0^+\preccurlyeq R_1$. Then each element of $\bu_\Z(\fg\otimes A)$ can be written uniquely as a $\Z$-linear combination of products of the form
\begin{equation}
\prod_{\alpha\in R_0^-}X_{\alpha}\left(\psi_\alpha\right)\prod_{i\in I}p_i\left(\varphi_i\right)\prod_{\beta\in R_0^+} X_\beta\left(\phi_\beta\right)\prod_{\gamma\in R_1}X_\gamma\left(\tau_\gamma\right)
\end{equation}
where $\psi_\alpha,\varphi_i,\phi_\beta,\tau_\gamma\in\f(\bb)$ for all $\alpha\in R_0^-$, $i\in I$, $\beta\in R_0^+$, and $\gamma\in R_1$, and $\tau_\gamma(b)\leq1$ for all in $\gamma\in R_1$ and all $b\in\bb$. Applying Theorem \ref{thm} to $\fg_0\otimes A$ we see that the $\Z$-span of all monomials of the form $\displaystyle\prod_{\alpha\in R_0^-}X_{\alpha}\left(\psi_\alpha\right)\prod_{i\in I}p_i\left(\varphi_i\right)\prod_{\beta\in R_0^+}X_\beta\left(\phi_\beta\right)$ is $\bu_\Z(\fg_0\otimes A)$. It is clear that the $\Z$-span of all monomials of the form $\displaystyle\prod_{\gamma\in R_1}X_\gamma\left(\tau_\gamma\right)$ is $\Lambda_{\Z}(\fg_1\otimes A)$.
\end{proof}

The remainder of this paper is devoted to the proof of Theorem \ref{thm}. First we will list and then prove all of the necessary straightening identities in $\bu_\Z(\fg\otimes A)$.

\section{Straightening Identities}

In this section we state all commutation relations and provide a proofs for some of these identities. We have commutation relations involving even generators and those involving even and odd generators. The proofs will follow the lists of the straightening identities. Note that in all of these identities all of the coefficients are in $\Z$. Also, note that in the cases which are not listed the factors commute.

\subsection{Even Generator Straightening Identities}

Given $\chi\in\f$ define
$$\cs(\chi):=\left\{\psi\in\f(\f):\sum_{\phi\in\f}\psi(\phi)\phi\leq\chi\right\}\textnormal{ and }\cs_k(\chi):=\cs(\chi)\cap\f_k(\f)$$
Given $j\geq0$ define
$$\cp(j):=\left\{\lambda\in\f\left(\Z_{\geq0}\right):\sum_{m\in\supp\lambda}\lambda(m)m=j\right\}
\textnormal{ and }\cp_k(j):=\cp(j)\cap\f_k\left(\Z_{\geq0}\right)$$
For all $j,k\in\Z_{\geq0}$, $\alpha\in R_0$ and $c,d\in A$, define $D^{\alpha}_{j,0}(d,c):=\delta_{j,0}$, and, for $k>0$, $D^{\alpha}_{j,k}:A^2\to\bu(\fg\otimes A)$ by
$$D^{\alpha}_{j,k}(d,c):=\sum_{\lambda\in\cp_k(j)}\prod_{m\in\supp\lambda}\left(x_{\alpha}\otimes d^mc\right)^{(\lambda(m))}$$

\begin{rem}\label{D}
%\begin{enumerate}
%\item[(a)]$ D_{0,k}^{\pm\alpha}(d,c)=\left(x_{\pm\alpha}\otimes c\right)^{(k)}$.
%\item $ D_{j,0}^{\pm\alpha}(d,c)=\delta_{j,0}$.
%\item
The definition of $D_{j,k}^{\pm\alpha}(d,c)$ above is equivalent to the recursive definition of $D_\alpha^\pm\left(j\chi_1,j\chi_d,k\chi_c\right)$ from \cite{C} by Proposition 5.2 in \cite{C}.
%\end{enumerate}
\end{rem}

%Part $(b)$ above holds because $\cp_0(j)=\varnothing$ if $j\neq0$, and $\cp_0(0)$ contains only the empty multiset (the 0 multiplicity function).
%\begin{equation} p_i(\chi)p_i(\varphi)-\prod_{a\in A}\binom{(\chi+\varphi)(a)}{\chi(a)}p_i(\chi+\varphi)\in\Z-\span\left\{p_i(\psi):|\psi|<|\chi|+|\varphi|\right\}
%\end{equation}

\begin{prop}\label{strteven}
For all $\alpha,\beta,\gamma\in R_0$, $i,j\in I$, $\chi,\varphi\in\f$, $a,b\in A$, $r,s\in\Z_{\geq0}$.
\begin{eqnarray}
p_i(\chi)p_j(\varphi)&=&p_j(\varphi)p_i(\chi)\label{pipj}\\
\left(x_{\beta}\otimes b\right)^{(r)}\left(x_{\beta}\otimes b\right)^{(s)}
&=&\binom{r+s}{s}\left(x_{\beta}\otimes b\right)^{(r+s)}\label{xbxb}\\
\left(x_\alpha\otimes a\right)^{(r)}\left(x_{-\alpha}\otimes b\right)^{(s)}
&=&\sum_{\substack{j,k,m\in\Z_{\geq0}\\j+k+m\leq\min(r,s)}}(-1)^{j+k+m}D^{-\alpha}_{j,s-j-k-m}(ab,b)p_\alpha\left(k\chi_{ab}\right)\nonumber\\
&\times&D^\alpha_{m,r-j-k-m}(ab,a)\label{x+x-}\\
\left(x_\alpha\otimes b\right)^{(r)}p_i(\chi)
&=&\sum_{\psi\in\cs_r(\chi)}p_i\left(\chi-\sum_{\phi\in\f}\psi(\phi)\phi\right)\nonumber\\
&\times&\prod_{\phi\in\f}\left(\binom{\alpha(h_i)+|\phi|-1}{|\phi|}\m(\phi)\left(x_\alpha\otimes b\pi(\phi)\right)\right)^{(\psi(\phi))}
\label{x+rpi}\\
p_i(\chi)\left(x_{-\alpha}\otimes b\right)^{(r)}
&=&\sum_{\psi\in\cs_r(\chi)}\prod_{\phi\in\f}\left(\binom{\alpha(h_i)+|\phi|-1}{|\phi|}
\m(\phi)\left(x_{-\alpha}\otimes b\pi(\phi)\right)\right)^{(\psi(\phi))}\nonumber\\
&\times&p_i\left(\chi-\sum_{\phi\in\f}\psi(\phi)\phi\right)\label{pix-r}\\
\left(x_{\gamma}\otimes a\right)^{(r)}\left(x_{\beta}\otimes b\right)^{(s)}
&=&\sum_{\substack{\psi\in\f\left(\Z_{>0}^2\right)\\ r\geq\sum j\psi(j,k)\\ s\geq\sum k\psi(j,k)}}\varepsilon_\psi
\left(x_{\beta}\otimes b\right)^{\left(s-\sum k\psi(j,k)\right)}\prod_{(j,k)\in\supp\psi}\left(x_{j\gamma+k\beta}\otimes a^jb^k\right)^{(\psi(j,k))}
\nonumber\\
&\times&\left(x_{\gamma}\otimes a\right)^{\left(r-\sum j\psi(j,k)\right)}\label{xaxb}
\end{eqnarray}
Where $\varepsilon_\psi\in\{\pm1\}$ for all $\psi\in\f\left(\Z_{>0}^2\right)$, and $x_{j\gamma+k\beta}=0$ if $j\gamma+k\beta\notin R$.
\end{prop}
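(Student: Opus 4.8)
The plan is to reduce every identity to the purely even setting. Since $\alpha,\beta,\gamma\in R_0$, the root vectors $x_{\pm\alpha},x_\beta,x_\gamma$ all lie in $\fg_0$, and each $h_i$ is a Cartan element, hence even; consequently every factor occurring in \eqref{pipj}--\eqref{xaxb} lies in $\bu(\fg_0\otimes A)$, the universal enveloping algebra of the \emph{ordinary} (non-super) map algebra of the reductive Lie algebra $\fg_0$. Thus no super-signs intervene and each identity is a statement about a classical enveloping algebra, to which the results of \cite{C} apply.

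First I would dispose of the two elementary identities. Identity \eqref{pipj} is the special case $\alpha=\alpha_i$, $\beta=\alpha_j$ of Proposition \ref{degp}\eqref{palphapbeta}, since each $p_i(\chi)$ is central in $\bu(\fg\otimes A)$. Identity \eqref{xbxb} is the standard divided-power relation: because $\beta\in R_0$ the element $x_\beta\otimes b$ is even and commutes with itself, so $(x_\beta\otimes b)^{(r)}(x_\beta\otimes b)^{(s)}=\frac{(x_\beta\otimes b)^{r+s}}{r!\,s!}=\binom{r+s}{s}(x_\beta\otimes b)^{(r+s)}$.

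For the four substantial identities \eqref{x+x-}, \eqref{x+rpi}, \eqref{pix-r} and \eqref{xaxb} the plan is to match them with the corresponding straightening identities of \cite{C} and to reduce the reductive $\fg_0$ to its simple factors. Write $\fg_0=\mathfrak{z}\oplus\bigoplus_k\fg_0^{(k)}$ with $\mathfrak{z}$ central and each $\fg_0^{(k)}$ simple; every $x_\alpha$ with $\alpha\in R_0$ lies in the semisimple part, and the roots entering a given identity all lie in a single factor $\fg_0^{(k)}$. For \eqref{xaxb}, if $\gamma,\beta$ lie in different factors they commute and no $j\gamma+k\beta$ with $j,k>0$ is a root, so the right-hand side collapses to the $\psi=0$ term, matching the left-hand side. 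Within one factor: \eqref{x+x-} is exactly the opposite-root straightening identity of \cite{C} for the $\sl_2$-triple $\{x_\alpha,h_\alpha,x_{-\alpha}\}$, the $D^{\pm\alpha}$ being identified with Chamberlin's $D_\alpha^{\pm}$ via Remark \ref{D} and the central factors $p_\alpha(k\chi_{ab})$ being the same objects as in \cite{C} by the definition $p_\alpha=\Omega_\alpha\circ p$; identities \eqref{x+rpi} and \eqref{pix-r} are \cite{C}'s relations commuting a root vector past $p_i$, whose proofs depend only on the integer $\alpha(h_i)$ and the abelianness of $\fh\otimes A$ through the recursion defining $p$, so that a possibly odd simple root $\alpha_i$ is irrelevant since $h_i$ is even and enters only via $[h_i\otimes a,x_\alpha\otimes b]=\alpha(h_i)\,x_\alpha\otimes ab$; and \eqref{xaxb} is \cite{C}'s general two-root identity, the signs $\varepsilon_\psi\in\{\pm1\}$ arising from the Chevalley structure constants encountered while commuting $x_\gamma$ past $x_\beta$ and forming the higher root vectors $x_{j\gamma+k\beta}$.

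The real content, and where I expect the main obstacle, lies in \eqref{x+x-} and \eqref{xaxb}. If one proves them from scratch rather than citing \cite{C}, \eqref{x+x-} is the Garland-type $\sl_2$ computation, proved by induction on $\min(r,s)$ while repeatedly invoking the recursion defining $p(\chi)$, and \eqref{xaxb} requires tracking the sign $\varepsilon_\psi$ through iterated commutators in a rank-two subalgebra. Since these are already established in \cite{C}, the residual work is to verify the notational dictionary (chiefly Remark \ref{D}) and to confirm that the reduction to a simple factor is legitimate; in particular one must check that the coroots $h_\alpha$ and the higher root vectors $x_{j\gamma+k\beta}$ behave identically in the reductive $\fg_0$ and in the simple setting of \cite{C}, and that the possible central component of $h_i$ for an odd $\alpha_i$ causes no change, since only $\alpha(h_i)$ and the commutativity of $\fh\otimes A$ enter.
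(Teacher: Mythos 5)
Your overall strategy is sound and, for half the identities, coincides with the paper's: \eqref{pipj} via Proposition \ref{degp}\eqref{palphapbeta}, \eqref{xbxb} by the divided-power computation, and \eqref{x+x-} by applying $\Omega_\alpha$ to Lemma 5.4 of \cite{C}, which applies because $\{x_{-\alpha},h_\alpha,x_\alpha\}$ is an $\mathfrak{sl}_2$-triple, with Remark \ref{D} supplying the dictionary between the two definitions of $D^{\pm\alpha}_{j,k}$. Where you genuinely diverge is on \eqref{x+rpi}, \eqref{pix-r} and \eqref{xaxb}: the paper does \emph{not} cite \cite{C} for these but proves them from scratch --- \eqref{x+rpi} by induction on $r$ with base case the new Lemma \ref{xdeltapi} (a self-contained combinatorial double induction), \eqref{pix-r} analogously, and \eqref{xaxb} as the combined statement of Lemma \ref{pmbasislem}, proved case by case according to whether the rank-two system $R_{\gamma,\beta}$ is of type $A_2$, $B_2$ or $G_2$. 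Your alternative --- observe that all factors lie in $\bu(\fg_0\otimes A)$ and import \cite{C} --- buys brevity, but carries two caveats. First, the import is not a literal citation: \cite{C} is stated for a \emph{simple} Lie algebra with its own coroots, whereas $\fg_0$ is only reductive and, when $\alpha_i$ is the odd simple root, $h_i$ has a nonzero central component and lies in no simple factor of $\fg_0$. Your proof-transfer argument (only the integer $\alpha(h_i)$ and the commutativity of $\fh\otimes A$ enter) is exactly right, but making it honest amounts to re-running the induction, i.e. to proving Lemma \ref{xdeltapi} --- which the paper needs anyway in its stronger form valid for odd $\delta$, since it is reused verbatim as identity \eqref{xgammapi} of Proposition \ref{strtodd}; your even-reduction cannot deliver that later case by citation. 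Second, you should verify that \cite{C} actually contains a quotable form of the general two-root identity \eqref{xaxb}; the authors' choice to prove Lemma \ref{pmbasislem} in full, and their remark in the introduction that many of these identities were previously unknown, suggest it may not be, in which case your route collapses back into the paper's rank-two case analysis.

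One small inaccuracy, which the paper itself also commits in the proof of Proposition \ref{degp}\eqref{palphapbeta}: $p_i(\chi)$ is \emph{not} central in $\bu(\fg\otimes A)$ --- it fails to commute with root vectors, which is precisely why \eqref{x+rpi} and \eqref{pix-r} are nontrivial. What \eqref{pipj} actually uses is that $p_i(\chi)$ and $p_j(\varphi)$ both lie in $\bu(\fh\otimes A)$, which is a commutative algebra because $\fh\otimes A$ is an abelian Lie algebra.
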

\begin{proof}
\eqref{pipj} was proved in Proposition \ref{degp}\eqref{palphapbeta}. \eqref{xbxb} is simple. \eqref{x+x-} follows from applying $\Omega_\alpha$ to Lemma 5.4 in \cite{C}. Lemma 5.4 in \cite{C} applies because, for all $\alpha\in R_0$, $\{x_{-\alpha},h_\alpha,x_\alpha\}$ is an $\lie{sl}_2$-triple. Proposition 5.2 in \cite{C} gives the necessary equivalence of our nonrecursive definition of $D_{j,k}^{\pm\alpha}(d,c)$ and the more general recursive definition in \cite{C}, (see Remark \ref{D}). \eqref{x+rpi} can be proved by induction on $r$. For the base case $(r=1)$ we will prove the more general statement given in Lemma \ref{xdeltapi}. The inductive step follows directly from the base case. The details are left to the reader. The proof of \eqref{pix-r} is similar to the proof of \eqref{x+rpi} and hence is omitted. Identity \eqref{xaxb} is a combined statement of the case by case version proved in Lemma \ref{pmbasislem}.
\end{proof}

\begin{rem}
In the cases $\fg\in\{A(1,1),P(3),Q(n)\}$ these identities are essentially the same. We simply need to replace the roots with the corresponding elements in the new index set for the roots, \cite{BK, FG, IK}.
\end{rem}

The following Lemma is a general statement of the $r=1$ case of \eqref{x+rpi}. Note that $\delta$ can be an even or an odd root.

\begin{lem}\label{xdeltapi}
For all $\delta\in R$, $b\in A$, $i\in I$ and $\chi\in\f$
$$\left(x_\delta\otimes b\right)p_i(\chi)
=\sum_{\psi\in\f(\chi)}\binom{|\psi|-1+\delta(h_i)}{|\psi|}\m(\psi)p_i(\chi-\psi)\left(x_\delta\otimes b\pi(\psi)\right)$$
\end{lem}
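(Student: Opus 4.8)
\textbf{The plan} is to establish the identity
$$\left(x_\delta\otimes b\right)p_i(\chi)
=\sum_{\psi\in\f(\chi)}\binom{|\psi|-1+\delta(h_i)}{|\psi|}\m(\psi)p_i(\chi-\psi)\left(x_\delta\otimes b\pi(\psi)\right)$$
by induction on $|\chi|$, mirroring the recursive definition of $p(\chi)$ and hence of $p_i(\chi)=\Omega_{\alpha_i}(p(\chi))$. The base case $|\chi|=0$ is trivial since $p_i(0)=1$ and the only summand is $\psi=0$, giving $\binom{\delta(h_i)-1}{0}=1$. For $|\chi|\geq1$, the strategy is to commute $x_\delta\otimes b$ past $p_i(\chi)$ one ``layer'' at a time using the recursion
$$-|\chi|\,p_i(\chi)=\sum_{\phi\in\f(\chi)-\{0\}}\m(\phi)\left(h_i\otimes\pi(\phi)\right)p_i(\chi-\phi),$$
so that I can reduce the commutator $[x_\delta\otimes b,\,p_i(\chi)]$ to commutators involving $h_i\otimes\pi(\phi)$ against the lower-degree factors $p_i(\chi-\phi)$, to which the induction hypothesis applies.

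First I would compute the single fundamental commutator $[x_\delta\otimes b,\,h_i\otimes c]$. Since $\{h_i\}\cup\{x_\alpha\}$ is a Chevalley basis and $[h_i,x_\delta]_\fg=\delta(h_i)x_\delta$, the bracket in the map superalgebra gives $[x_\delta\otimes b,\,h_i\otimes c]=-\delta(h_i)\,x_\delta\otimes bc$; equivalently $(x_\delta\otimes b)(h_i\otimes c)=(h_i\otimes c)(x_\delta\otimes b)-\delta(h_i)(x_\delta\otimes bc)$. Next I would multiply the recursion for $p_i(\chi)$ on the left by $x_\delta\otimes b$, push $x_\delta\otimes b$ past each $h_i\otimes\pi(\phi)$ using this commutator, and then push the resulting $x_\delta\otimes(b\pi(\phi))$ (or $x_\delta\otimes b$) past $p_i(\chi-\phi)$ using the induction hypothesis. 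Collecting terms, I would obtain an expression for $-|\chi|\,(x_\delta\otimes b)p_i(\chi)$ as a double sum over $\phi\in\f(\chi)-\{0\}$ and $\psi'\in\f(\chi-\phi)$, with each term of the form $p_i(\chi-\phi-\psi')\,(x_\delta\otimes b\pi(\phi+\psi'))$ or $p_i(\chi-\phi-\psi')(x_\delta\otimes b\pi(\psi'))$ carrying the appropriate binomial and multinomial weights.

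\textbf{The main obstacle} will be the combinatorial bookkeeping that reorganizes this double sum into the claimed single sum over $\psi\in\f(\chi)$. After reindexing by $\psi=\phi+\psi'$ (for the terms where the $h_i$-layer's argument is absorbed into the $x_\delta$ factor) together with the ``diagonal'' terms coming from the trivial commutator (where $x_\delta\otimes b$ commutes through), the coefficient of each fixed $p_i(\chi-\psi)(x_\delta\otimes b\pi(\psi))$ must be shown to equal $-|\chi|$ times $\binom{|\psi|-1+\delta(h_i)}{|\psi|}\m(\psi)$. The key identities I expect to need are the Vandermonde/Pascal-type relation that lets $\binom{|\psi'|-1+\delta(h_i)}{|\psi'|}$ combine with the extra $\delta(h_i)$ factor into $\binom{|\psi|-1+\delta(h_i)}{|\psi|}$, and a standard multinomial splitting $\m(\psi)=\sum_{\phi\leq\psi}(\text{partial factors})\m(\phi)\m(\psi-\phi)$ that accounts for the way $\pi(\phi)$ and $\pi(\psi')$ recombine into $\pi(\psi)$. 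Verifying that these weights collapse to exactly the stated coefficient, with the overall $-1/|\chi|$ factor absorbed correctly, is the delicate computational heart of the argument; everything else is a routine unwinding of the definitions.
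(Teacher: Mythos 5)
Your plan is the paper's proof in outline: induction on $|\chi|$, expansion via the defining recursion for $p_i(\chi)$, the commutator $(x_\delta\otimes b)(h_i\otimes c)=(h_i\otimes c)(x_\delta\otimes b)-\delta(h_i)(x_\delta\otimes bc)$, the induction hypothesis applied inside both resulting families of terms, and a final reduction to a purely combinatorial identity. The identity you anticipate is exactly the one the paper isolates as \eqref{comb},
$$|\psi_1|\binom{|\psi_1|-1+\delta(h_i)}{|\psi_1|}\m(\psi_1)=\delta(h_i)\sum_{\psi\in\f(\psi_1)-\{0\}}\m(\psi)\binom{|\psi_1|-|\psi|-1+\delta(h_i)}{|\psi_1|-|\psi|}\m(\psi_1-\psi),$$
and the paper proves it by a second induction on $|\psi_1|$ using precisely the tools you name: Pascal-type relations for the binomials and the multinomial recursion $\m(\psi)=\sum_{c\in\supp\psi}\m(\psi-\chi_c)$. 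So the strategy is sound and essentially identical to the paper's; that identity still needs an actual proof, but you have correctly located it as the computational heart.

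There is, however, one genuine gap in your ``collecting terms'' step. The terms coming from the trivial part of the commutator are \emph{not} of the form $p_i(\chi-\phi-\psi')(x_\delta\otimes b\pi(\psi'))$ as you state: each still carries the factor $(h_i\otimes\pi(\phi))$ on the far left, i.e.\ it reads $\m(\phi)\binom{|\psi'|-1+\delta(h_i)}{|\psi'|}\m(\psi')\,(h_i\otimes\pi(\phi))\,p_i(\chi-\phi-\psi')\,(x_\delta\otimes b\pi(\psi'))$. As written, your coefficient-matching claim for the ``diagonal'' terms does not parse, because these terms are not scalar multiples of $p_i(\chi-\psi)(x_\delta\otimes b\pi(\psi))$. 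The missing idea, which is how the paper handles it, is to swap the order of summation and, for each fixed $\psi'$, recognize the inner sum as the defining recursion run backwards:
$$\sum_{\phi\in\f(\chi-\psi')-\{0\}}\m(\phi)\left(h_i\otimes\pi(\phi)\right)p_i(\chi-\psi'-\phi)=-\left(|\chi|-|\psi'|\right)p_i(\chi-\psi').$$
This collapse is what produces the diagonal contribution $(|\chi|-|\psi|)\binom{|\psi|-1+\delta(h_i)}{|\psi|}\m(\psi)$; identity \eqref{comb} then supplies the complementary $|\psi|\binom{|\psi|-1+\delta(h_i)}{|\psi|}\m(\psi)$ from the $\delta(h_i)$-family, and the two add to the required $|\chi|\binom{|\psi|-1+\delta(h_i)}{|\psi|}\m(\psi)$. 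With that one repair, plus a proof of \eqref{comb}, your argument coincides with the paper's.
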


\begin{proof}
We will prove Lemma \ref{xdeltapi} by induction on $|\chi|$.

\hskip-.1in$|\chi|\left(x_\delta\otimes b\right)p_i(\chi)$
\begin{eqnarray*}
&=&-\sum_{\psi\in\f(\chi)-\{0\}}\m(\psi)(x_\delta\otimes b)\left(h_i\otimes\pi(\psi)\right)p_i(\chi-\psi)\\
&=&-\sum_{\psi\in\f(\chi)-\{0\}}\m(\psi)\left(h_i\otimes\pi(\psi)\right)(x_\delta\otimes b)p_i(\chi-\psi)
+\delta\left(h_i\right)\sum_{\psi\in\f(\chi)-\{0\}}\m(\psi)(x_\delta\otimes b\pi(\psi))p_i(\chi-\psi)\\
&=&-\sum_{\psi\in\f(\chi)-\{0\}}\m(\psi)\left(h_i\otimes\pi(\psi)\right)\sum_{\psi'\in\f(\chi-\psi)}
\binom{|\psi'|-1+\delta(h_i)}{|\psi'|}\m\left(\psi'\right)p_i\left(\chi-\psi-\psi'\right)\left(x_\delta\otimes b\pi\left(\psi'\right)\right)\\
&+&\delta\left(h_i\right)\sum_{\psi\in\f(\chi)-\{0\}}\m(\psi)\sum_{\psi'\in\f(\chi-\psi)}\binom{|\psi'|-1+\delta(h_i)}{|\psi'|}\m\left(\psi'\right)
p_i\left(\chi-\psi-\psi'\right)\left(x_\delta\otimes b\pi(\psi)\pi\left(\psi'\right)\right)\\
&&\textnormal{(by the induction hypothesis)}\\
&=&-\sum_{\psi'\in\f(\chi)-\{\chi\}}\binom{|\psi'|-1+\delta(h_i)}{|\psi'|}\m\left(\psi'\right)\sum_{\psi\in\f\left(\chi-\psi'\right)-\{0\}}\m(\psi)
\left(h_i\otimes\pi(\psi)\right)p_i\left(\chi-\psi-\psi'\right)\left(x_\delta\otimes b\pi\left(\psi'\right)\right)\\
&+&\delta\left(h_i\right)\sum_{\psi\in\f(\chi)-\{0\}}\m(\psi)\sum_{\substack{\psi_1\in\f(\chi)\\ \psi\leq\psi_1}} \binom{|\psi_1|-|\psi|-1+\delta(h_i)}{|\psi_1|-|\psi|}\m\left(\psi_1-\psi\right)p_i\left(\chi-\psi_1\right)
\left(x_\delta\otimes b\pi\left(\psi_1\right)\right)\\
&=&\sum_{\psi_1\in\f(\chi)}\binom{|\psi_1|-1+\delta(h_i)}{|\psi_1|}\m\left(\psi_1\right)(|\chi|-|\psi_1|)p_i(\chi-\psi_1)
\left(x_\delta\otimes b\pi\left(\psi_1\right)\right)\\
&+&\delta\left(h_i\right)\sum_{\psi_1\in\f(\chi)-\{0\}}\sum_{\psi\in\f(\psi_1)-\{0\}}\m(\psi)\binom{|\psi_1|-|\psi|-1+\delta(h_i)}{|\psi_1|-|\psi|}
\m\left(\psi_1-\psi\right)p_i\left(\chi-\psi_1\right)\left(x_\delta\otimes b\pi\left(\psi_1\right)\right)
\end{eqnarray*}
So for Lemma \ref{xdeltapi} it suffices to show that for all $\psi_1\in\f(\chi)$
\begin{eqnarray}
&&\hskip-.3in|\psi_1|\binom{|\psi_1|-1+\delta(h_i)}{|\psi_1|}\m(\psi_1)\nonumber\\
&=&\delta(h_i)\sum_{\psi\in\f(\psi_1)-\{0\}}\m(\psi)\binom{|\psi_1|-|\psi|-1+\delta(h_i)}{|\psi_1|-|\psi|}\m(\psi_1-\psi)\label{comb}
\end{eqnarray}
This will be proved by induction on $|\psi_1|$. It is easily checked if $|\psi_1|=1$. For $|\psi_1|\geq1$ we have
\begin{eqnarray*}
&&\hskip-.3in\delta(h_i)\sum_{\psi\in\f(\psi_1)-\{0\}}\m(\psi)\binom{|\psi_1|-|\psi|-1+\delta(h_i)}{|\psi_1|-|\psi|}\m(\psi_1-\psi)\\
&=&\delta(h_i)\sum_{\psi\in\f(\psi_1)-\{0\}}\sum_{c\in\supp\psi}\m(\psi-\chi_c)\binom{|\psi_1|-|\psi|-1+\delta(h_i)}{|\psi_1|-|\psi|}
\m(\psi_1-\psi)\\
&=&\delta(h_i)\sum_{c\in\supp\psi_1}\sum_{\psi'\in\f(\psi_1-\chi_c)}\m(\psi')\binom{|\psi_1|-|\psi'|-2+\delta(h_i)}{|\psi_1|-|\psi'|-1}
\m(\psi_1-\psi'-\chi_c)\\
&=&\delta(h_i)\sum_{c\in\supp\psi_1}\sum_{\psi'\in\f(\psi_1-\chi_c)-\{0\}}\m(\psi')\binom{|\psi_1|-|\psi'|-2+\delta(h_i)}{|\psi_1|-|\psi'|-1}
\m(\psi_1-\psi'-\chi_c)\\
&+&\delta(h_i)\sum_{c\in\supp\psi_1}\binom{|\psi_1|-2+\delta(h_i)}{|\psi_1|-1}\m(\psi_1-\chi_c)\\
&=&\sum_{c\in\supp\psi_1}(|\psi_1|-1)\binom{|\psi_1|-2+\delta(h_i)}{|\psi_1|-1}\m(\psi_1-\chi_c)\\
&+&\delta(h_i)\sum_{c\in\supp\psi_1}\binom{|\psi_1|-2+\delta(h_i)}{|\psi_1|-1}\m(\psi_1-\chi_c)\hskip.2in\textnormal{(By the induction hypothesis)}\\
&=&(|\psi_1|-1)\binom{|\psi_1|-2+\delta(h_i)}{|\psi_1|-1}\m(\psi_1)+\delta(h_i)\binom{|\psi_1|-2+\delta(h_i)}{|\psi_1|-1}\m(\psi_1)\\
&=&|\psi_1|\binom{|\psi_1|-1+\delta(h_i)}{|\psi_1|}\m(\psi_1)
\end{eqnarray*}
\end{proof}

%(\ref{x+rpi}) for a general $r$ can now be proved by induction on $r$ using the base case Lemma \ref{xdeltapi}.

Given $\alpha,\beta\in R_0$ define $R_{\alpha,\beta}:=\{i\alpha+j\beta:i,j\in\Z\}\cap R_0$. There are three cases for \eqref{xaxb} corresponding to the type of the 2-dimensional root system $R_{\alpha,\beta}$ ($A_2$, $B_2$, or $G_2$). The following lemma explicitly states those cases.

\begin{lem}\label{pmbasislem}
Let $a,b\in A$, $r,s\in\Z_{\geq0}$, and $\alpha,\beta\in R_0$ be given. Then the following identities hold:

$(1)$ If $ R_{\alpha,\beta}$ is of type $A_2$
\begin{eqnarray*}
\left(x_\alpha\otimes a\right)^{(r)}\left(x_\beta\otimes b\right)^{(s)}
&=&\sum_{k=0}^{\min(r,s)}\varepsilon^k\left(x_\beta\otimes b\right)^{(s-k)}\left(x_{\alpha+\beta}\otimes ab\right)^{(k)}
\left(x_\alpha\otimes a\right)^{(r-k)}
\end{eqnarray*}
where $\varepsilon\in\{1,-1\}$ is given by $[x_\alpha,x_\beta]=\varepsilon x_{\alpha+\beta}$.

$(2)$ If $ R_{\alpha,\beta}$ is of type $B_2$, then
\begin{eqnarray*}
\left(x_\alpha\otimes a\right)^{(r)}\left(x_\beta\otimes b\right)^{(s)}
&=&\sum\varepsilon_{k_1,k_2}\left(x_\beta\otimes b\right)^{(s-k_1-k_2)}\prod_{j=1}^2\left(x_{j\alpha+\beta}\otimes a^jb\right)^{(k_j)}\\
&\times&\left(x_\alpha\otimes a\right)^{(r-k_1-2k_2)}
\end{eqnarray*}
where the sum is over all $k_1,k_2\in\Z_{\geq0}$ such that
$k_1+k_2\leq s$ and $k_1+2k_2\leq r$, and
$\varepsilon_{k_1,k_2}\in\{1,-1\}$, for all $k_1,k_2$.

$(3)$ If $ R_{\alpha,\beta}$ is of type $G_2$, then
\begin{eqnarray*}
\left(x_\alpha\otimes a\right)^{(r)}\left(x_\beta\otimes b\right)^{(s)}
&=&\sum\varepsilon_{k_1,k_2,k_3,k_4}\left(x_\beta\otimes b\right)^{\left(s-\sum_{j=1}^3k_j-2k_4\right)}
\prod_{j=1}^3\left(x_{j\alpha+\beta}\otimes a^jb\right)^{(k_j)}\\
&\times&\left(x_{3\alpha+2\beta}\otimes a^3b^2\right)^{(k_4)}
\left(x_\alpha\otimes a\right)^{\left(r-\sum_{j=1}^3jk_j-3k_4\right)}
\end{eqnarray*}
where the sum is over all $k_1,k_2,k_3,k_4\in\Z_{\geq0}$ such
that $k_1+k_2+k_3+2k_4\leq s$ and  $k_1+2k_2+3k_3+3k_4\leq r$,
and $\varepsilon_{k_1,k_2,k_3,k_4}\in\{1,-1\}$, for all
$k_1,k_2,k_3,k_4$.
\end{lem}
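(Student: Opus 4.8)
The plan is to reduce to the classical (non-super) setting and then exploit the structure of the rank-$2$ root system $R_{\alpha,\beta}$. Since $\alpha,\beta\in R_0$ and every root of the form $j\alpha+k\beta$ appearing on the right-hand side again lies in $R_0$, all factors involved belong to $\fg_0\otimes A$, the map algebra of the reductive Lie algebra $\fg_0$, so the $\Z_2$-grading plays no role. Concretely, the subalgebra $\fs\subseteq\fg_0$ spanned by $\{x_\gamma:\gamma\in R_{\alpha,\beta}\}$ together with the corresponding coroots is semisimple with root system $R_{\alpha,\beta}$ of type $A_2$, $B_2$, or $G_2$, and all three identities take place inside $\bu(\fs\otimes A)\subseteq\bu(\fg\otimes A)$. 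Thus it suffices to establish the identities in $\bu(\fg_0\otimes A)$, which is exactly the classical situation treated in \cite{C}: each of $(1)$--$(3)$ is the explicit rank-$2$ specialization of Chamberlin's straightening identity, obtained by restricting it to $\fs\otimes A$. I now indicate the direct argument in each case.

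For case $(1)$, type $A_2$, the root string is as short as possible: $2\alpha+\beta\notin R$ and $\alpha+2\beta\notin R$, so by Proposition \ref{roots}(a)(ii) the element
$$z:=[x_\alpha\otimes a,\,x_\beta\otimes b]=[x_\alpha,x_\beta]\otimes ab=\varepsilon\,(x_{\alpha+\beta}\otimes ab)$$
commutes with both $X:=x_\alpha\otimes a$ and $Y:=x_\beta\otimes b$. Hence $X$, $Y$, $z$ satisfy the Heisenberg relation $XY-YX=z$ with $z$ central in the subalgebra they generate. A straightforward induction on $r+s$ then yields the divided-power normal-ordering identity
$$X^{(r)}Y^{(s)}=\sum_{k=0}^{\min(r,s)}Y^{(s-k)}z^{(k)}X^{(r-k)},$$
and since $z^{(k)}=\varepsilon^{k}(x_{\alpha+\beta}\otimes ab)^{(k)}$ this is precisely the asserted formula.

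For cases $(2)$ and $(3)$, types $B_2$ and $G_2$, the root strings through $\beta$ in the $\alpha$-direction have length greater than one, so the iterated commutators $(\ad x_\alpha)^{j}x_\beta$ produce the further root vectors $x_{j\alpha+k\beta}$ and $z$ is no longer central. The argument becomes an induction on $r$ (equivalently on $r+s$): one moves a single factor $x_\alpha\otimes a$ to the right past $(x_\beta\otimes b)^{(s)}$ and the already-created factors using $[x_\alpha,x_{j\alpha+k\beta}]=c_{j,k}\,x_{(j+1)\alpha+k\beta}$ with Chevalley structure constants $c_{j,k}\in\Z$, each such pass tensoring the second argument by $a$ and incrementing the appropriate divided-power exponent. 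The main obstacle is the combinatorial bookkeeping: one must track how each commutation step redistributes divided powers among the factors $x_{j\alpha+k\beta}\otimes a^{j}b^{k}$ and verify that, after collecting terms, every resulting coefficient is $\pm1$. Integrality and the $\varepsilon\in\{\pm1\}$ normalization are forced by the Chevalley basis structure constants of $R_{\alpha,\beta}$ together with the multinomial identities for divided powers; this is exactly the content of the rank-$2$ cases of \cite{C}, to which the reduction above appeals.
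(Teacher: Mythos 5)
Your proposal is correct, but it takes a genuinely different route from the paper's. For part (1) the paper runs a double induction entirely inside $\bu(\fg\otimes A)$ (induction on $s$, whose base case $s=1$ is itself an induction on $r$), expanding commutators term by term; you instead observe that in type $A_2$ the element $z=[x_\alpha\otimes a,\,x_\beta\otimes b]=\varepsilon\,(x_{\alpha+\beta}\otimes ab)$ commutes with both factors, so the statement becomes the standard divided-power normal-ordering identity for a Heisenberg pair. This is cleaner and isolates exactly where the $A_2$ hypothesis enters; the paper's computation uses the same vanishing commutators, only buried inside the induction. For parts (2) and (3) the paper says only that the inductions are ``similar and hence are omitted,'' while you reduce to the classical situation and quote \cite{C}: since every root vector occurring is even, the identity lives in $\bu(\fs\otimes A)$ for the rank-two simple subalgebra $\fs$ spanned by the $\fg_\gamma$, $\gamma\in R_{\alpha,\beta}$, and the coroots. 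That citation is legitimate---it is the same mechanism the paper itself uses for identity \eqref{x+x-}, which is imported from Lemma 5.4 of \cite{C} via an $\lie{sl}_2$-triple---and it buys a complete argument for precisely the two cases the paper leaves to the reader, at the cost of self-containment. (Your sketched ``bookkeeping'' induction for (2)--(3) would not stand on its own, but it does not need to once the citation is in place.)

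One point you should make explicit, since both halves of your argument rest on it: for $\alpha,\beta\in R_0$ you need that every root lying in $\Z\alpha+\Z\beta$ is even, i.e.\ $(\Z\alpha+\Z\beta)\cap R=R_{\alpha,\beta}$. In part (1), ``$R_{\alpha,\beta}$ of type $A_2$'' only says $2\alpha+\beta\notin R_0\cup\{0\}$; to conclude $[x_\alpha,x_{\alpha+\beta}]=0$ from Proposition \ref{roots}(a)(ii) you must also rule out $2\alpha+\beta\in R_1$. In parts (2)--(3), the structure constants of $\fg$ are $\pm(r_{\gamma,\delta}+1)$ with the $r_{\gamma,\delta}$ computed from root strings in the \emph{full} system $R$, so to know that the restricted basis of $\fs$ is a genuine classical Chevalley basis (and hence that the lemma of \cite{C} applies verbatim) you need those strings to contain no odd roots. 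Both follow from the fact, verified type by type, that no odd root lies in the lattice $\Z R_0$ (for instance, for $B(m,n)$ odd roots have odd $\delta$-coefficient sum; for $F(4)$ they have half-integral $\delta$-coefficient; for $P(n)$ they have coefficient sum $\pm2$ while elements of $\Z R_0$ have sum $0$). This verification is routine, and the paper's own induction tacitly uses it as well, so it is a gap of exposition rather than of substance---but it is the hinge on which your reduction turns, and it deserves a sentence.
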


\begin{proof}
We will prove part $(1)$ in detail by induction on $s$. The proofs of the other parts are similar and hence are omitted. If $s=0$ each part is trivially true. If $s=1$ we will prove (1) by induction on $r$. The lemma is easily verified by direct computation in the cases $s=1$ and $r\leq3$. Assume (1) for $s=1$ and $r\geq3$. Then for part (1)
\begin{eqnarray*}
&&\hskip-.4in(r+1)\left(x_\alpha\otimes a\right)^{(r+1)}\left(x_\beta\otimes b\right)\\
&=&\left(x_\alpha\otimes a\right)\left(x_\alpha\otimes a\right)^{(r)}\left(x_\beta\otimes b\right)\\
&=&\sum_{k=0}^{1}\varepsilon^k\left(x_\alpha\otimes a\right)\left(x_\beta\otimes b\right)^{(1-k)}
\left(x_{\alpha+\beta}\otimes ab\right)^{(k)}\left(x_\alpha\otimes a\right)^{(r-k)}\hskip.2in(\textnormal{By the induction hypothesis})\\
&=&\left(x_\alpha\otimes a\right)\left(x_\beta\otimes b\right)\left(x_\alpha\otimes a\right)^{(r)}+
\varepsilon\left(x_\alpha\otimes a\right)\left(x_{\alpha+\beta}\otimes ab\right)\left(x_\alpha\otimes a\right)^{(r-1)}\\
&=&(r+1)\left(x_\beta\otimes b\right)\left(x_\alpha\otimes a\right)^{(r+1)}+
\varepsilon\left(x_{\alpha+\beta}\otimes ab\right)\left(x_\alpha\otimes a\right)^{(r)}
+r\varepsilon\left(x_{\alpha+\beta}\otimes ab\right)\left(x_\alpha\otimes a\right)^{(r)}\\
&=&(r+1)\bigg(\left(x_\beta\otimes b\right)\left(x_\alpha\otimes a\right)^{(r+1)}
+\varepsilon\left(x_{\alpha+\beta}\otimes ab\right)\left(x_\alpha\otimes a\right)^{(r)}\bigg)
\end{eqnarray*}

So (1) is true for $s=1$. Proceed by induction on $s\geq1$. Assume the lemma for some $s\geq1$ then in the $(1)$ case
\begin{eqnarray*}
&&\hskip-.4in(s+1)\left(x_\alpha\otimes a\right)^{(r)}\left(x_\beta\otimes b\right)^{(s+1)}\\
&=&\left(x_\alpha\otimes a\right)^{(r)}\left(x_\beta\otimes b\right)\left(x_\beta\otimes b\right)^{(s)}\\
&=&\left(x_\beta\otimes b\right)\left(x_\alpha\otimes a\right)^{(r)}\left(x_\beta\otimes b\right)^{(s)}+\varepsilon\left(x_{\alpha+\beta}\otimes ab\right)\left(x_\alpha\otimes a\right)^{(r-1)}\left(x_\beta\otimes b\right)^{(s)}\\
&=&\sum_{k=0}^{\min(r,s)}\varepsilon^k(s+1-k)\left(x_\beta\otimes b\right)^{(s+1-k)}\left(x_{\alpha+\beta}\otimes ab\right)^{(k)}\left(x_\alpha\otimes a\right)^{(r-k)}\\
&+&\sum_{k=0}^{\min(r-1,s)}\varepsilon^k(k+1)\left(x_\beta\otimes b\right)^{(s-k)}\left(x_{\alpha+\beta}\otimes ab\right)^{(k+1)}\left(x_\alpha\otimes a\right)^{(r-1-k)}(\textnormal{By the induction hypothesis})
\end{eqnarray*}
\begin{eqnarray*}
&=&\sum_{k=0}^{\min(r,s)}\varepsilon^k(s+1-k)\left(x_\beta\otimes b\right)^{(s+1-k)}\left(x_{\alpha+\beta}\otimes ab\right)^{(k)}\left(x_\alpha\otimes a\right)^{(r-k)}\\
&+&\sum_{k=1}^{\min(r-1,s)+1}\varepsilon^kk\left(x_\beta\otimes b\right)^{(s+1-k)}\left(x_{\alpha+\beta}\otimes ab\right)^{(k)}\left(x_\alpha\otimes a\right)^{(r-k)}\\
&=&(s+1)\sum_{k=0}^{\min(r,s+1)}\varepsilon^k\left(x_\beta\otimes b\right)^{(s+1-k)}\left(x_{\alpha+\beta}\otimes ab\right)^{(k)}\left(x_\alpha\otimes a\right)^{(r-k)}
\end{eqnarray*}
\end{proof}

\subsection{Even and Odd Generator Straightening Identities}

If $\alpha,\beta\in R$ define the root string $\alpha$ through $\beta$ to be
$$\sigma_\alpha^\beta:=\left\{\beta-r_{\alpha,\beta}\alpha,\ldots,\beta+q\alpha\right\}$$
Then define $c_{\alpha,\beta}$ as follows
$$c_{\alpha,\beta}:=\left\{\begin{array}{cc}
                0 & \textnormal{if } \alpha+\beta\notin R\\
                \pm\left(r_{\alpha,\beta}+1\right) & \textnormal{if } \lie g\neq P(n)\textnormal{ for }n\neq3
                \textnormal{ and }(\alpha,\alpha)\neq0\textnormal{ or }(\beta,\beta)\neq0\\
                \pm\left(r_{\alpha,\beta}+2\right) & \textnormal{if } \lie g=P(n)\textnormal{ for }n\neq3
                \textnormal{ and }\alpha=\beta_{j,i},\beta=\alpha_{i,j}\\
                \pm\beta(h_\alpha) & \textnormal{if } (\alpha,\alpha)=0=(\beta,\beta)
                \end{array}\right.$$
Then, for all $\alpha,\beta\in R$ with $\alpha\neq-\beta$, $[x_\alpha,x_\beta]=c_{\alpha,\beta}x_{\alpha+\beta}$ and $c_{\alpha,\beta}\in\Z$.

We first list all of the necessary identities involving both even and odd generators and then give some of the proofs.
\begin{prop}\label{strtodd}
For all $m\in\Z_{\geq0}$, $\alpha\in R_0$, $\gamma,\delta\in R_1$, $a,b\in A$, $i\in I$, and $\chi\in\f$.
\begin{eqnarray}
\left(x_\gamma\otimes a\right)p_i(\chi)
&=&\sum_{\psi\in\f(\chi)}\binom{|\psi|-1+\gamma(h_i)}{|\psi|}\m(\psi)p_i(\chi-\psi)\left(x_\gamma\otimes a\pi(\psi)\right)\label{xgammapi}\\
%c_{\gamma,\gamma}(x_{2\gamma}\otimes bc)&=&\left[x_\gamma\otimes b,x_\gamma\otimes c\right]=2\left(x_\gamma\otimes b\right)\left(x_\gamma\otimes c\right)\\
\left(x_\gamma\otimes a\right)^2
&=&\pm2\left(x_{2\gamma}\otimes a^2\right),\ \textnormal{if }2\gamma\in R\label{xgam^2}\\
\left(x_\gamma\otimes a\right)\left(x_{-\gamma}\otimes b\right)
&=&-\left(x_{-\gamma}\otimes b\right)\left(x_\gamma\otimes a\right)+\left(h_\gamma\otimes ab\right),
\hskip.1in\textnormal{ if }\gamma\in R_1\cap(- R_1)\label{xgamx-gam}\\
\left(x_\gamma\otimes a\right)\left(x_\delta\otimes b\right)
&=&-\left(x_\delta\otimes b\right)\left(x_\gamma\otimes a\right)+c_{\gamma,\delta}\left(x_{\gamma+\delta}\otimes ab\right),
\textnormal{ if }\gamma+\delta\neq0\label{xgammaxdelta}\\
\left(x_{\pm\gamma}\otimes a\right)\left(x_{\mp2\gamma}\otimes b\right)^{(m)}
&=&\left(x_{\mp2\gamma}\otimes b\right)^{(m)}\left(x_{\pm\gamma}\otimes a\right)\mp z_\gamma\gamma\left(h_\gamma\right)
\left(x_{\mp2\gamma}\otimes b\right)^{(m-1)}\left(x_{\mp\gamma}\otimes ab\right),\nonumber\\
&&\textnormal{if }2\gamma\in R_0,\ z_\gamma:=\frac{c_{\gamma,\gamma}}{2}\in\{\pm2\}\label{xgamx-2gam^m}\\
\left(x_\alpha\otimes a\right)^{(m)}\left(x_\gamma\otimes b\right)
&=&\sum_{k=1}^m\left(\prod_{s=1}^k\varepsilon_s\right)\binom{r_{\alpha,\gamma}+k}{k}\left(x_{\gamma+k\alpha}\otimes a^kb\right)
\left(x_\alpha\otimes a\right)^{(m-k)}\nonumber\\
&+&\left(x_\gamma\otimes b\right)\left(x_\alpha\otimes a\right)^{(m)},\nonumber\\
&&\textnormal{if }\alpha\neq\pm2\gamma,\ x_{\gamma+k\alpha}:=0\textnormal{ if }\gamma+k\alpha\notin R,\nonumber\\
&&\varepsilon_s=\pm1\textnormal{ such that } \left[x_\alpha,x_{\gamma+(s-1)\alpha}\right]=\varepsilon_s\left(r_{\alpha,\gamma}+s\right)x_{\gamma+k\alpha}\label{xalphmxgam}
\end{eqnarray}
\end{prop}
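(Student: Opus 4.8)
The plan is to dispatch the six identities in increasing order of difficulty. First, \eqref{xgammapi} requires no new work: it is exactly the special case $\delta=\gamma$ of Lemma~\ref{xdeltapi}, which was proved for \emph{every} root $\delta$, even or odd. The three degree--two identities \eqref{xgam^2}, \eqref{xgamx-gam}, \eqref{xgammaxdelta} I would obtain by expanding the defining bracket of $\fg\otimes A$ inside $\bu(\fg\otimes A)$, keeping in mind that for an odd element $x_\gamma\otimes a$ the superbracket against $y$ is $(x_\gamma\otimes a)\,y-(-1)^{|y|}y\,(x_\gamma\otimes a)$; when $y$ is also odd this is the \emph{anticommutator}, which produces the minus sign on the reversed products. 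Concretely, $(x_\gamma\otimes a)^2=\tfrac12[x_\gamma\otimes a,x_\gamma\otimes a]=\tfrac12 c_{\gamma,\gamma}(x_{2\gamma}\otimes a^2)=\pm2(x_{2\gamma}\otimes a^2)$ since $z_\gamma=c_{\gamma,\gamma}/2\in\{\pm2\}$, giving \eqref{xgam^2}; identity \eqref{xgamx-gam} is $[x_\gamma\otimes a,x_{-\gamma}\otimes b]=h_\gamma\otimes ab$ rewritten as a product; and \eqref{xgammaxdelta} is $[x_\gamma\otimes a,x_\delta\otimes b]=c_{\gamma,\delta}(x_{\gamma+\delta}\otimes ab)$ rewritten the same way, using $[x_\gamma,x_\delta]_\fg=c_{\gamma,\delta}x_{\gamma+\delta}$ from the paragraph preceding Proposition~\ref{strtodd}.

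For \eqref{xgamx-2gam^m} I would induct on $m$, treating the upper sign ($+\gamma,-2\gamma$); the lower case is identical. Because $x_{-2\gamma}$ is even, $(x_\gamma\otimes a)(x_{-2\gamma}\otimes b)=(x_{-2\gamma}\otimes b)(x_\gamma\otimes a)+c_{\gamma,-2\gamma}(x_{-\gamma}\otimes ab)$, so the base case $m=1$ is precisely the structure--constant identity $c_{\gamma,-2\gamma}=-z_\gamma\gamma(h_\gamma)$. I would verify this inside the rank--one subalgebra $\langle x_{\pm\gamma},x_{\pm2\gamma},h_\gamma\rangle\cong\osp(1,2)$, equivalently by applying the super--Jacobi identity to $x_\gamma,x_\gamma,x_{-2\gamma}$, which yields $2[x_\gamma,[x_\gamma,x_{-2\gamma}]]=[[x_\gamma,x_\gamma],x_{-2\gamma}]$ and thereby expresses $c_{\gamma,-2\gamma}$ through $c_{\gamma,\gamma}$ and the coroot normalization. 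For the inductive step I write $(x_{-2\gamma}\otimes b)^{(m+1)}=\tfrac{1}{m+1}(x_{-2\gamma}\otimes b)^{(m)}(x_{-2\gamma}\otimes b)$, apply the induction hypothesis followed by the $m=1$ case to push $x_\gamma\otimes a$ one further step, and collect terms using \eqref{xbxb} together with the fact that $x_{-\gamma}\otimes ab$ commutes with $x_{-2\gamma}\otimes b$ (their bracket vanishes since $-3\gamma\notin R$). The coefficient bookkeeping closes exactly because of the same relation $c_{\gamma,-2\gamma}=-z_\gamma\gamma(h_\gamma)$.

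Finally, \eqref{xalphmxgam} is the odd analogue of the $A_2$--type computation in Lemma~\ref{pmbasislem}; since $\alpha\in R_0$ is even, the superbracket against $x_\alpha\otimes a$ is the ordinary commutator and no super--signs intervene. I would induct on $m$. The base case $m=1$ is $(x_\alpha\otimes a)(x_\gamma\otimes b)=(x_\gamma\otimes b)(x_\alpha\otimes a)+c_{\alpha,\gamma}(x_{\gamma+\alpha}\otimes ab)$ with $c_{\alpha,\gamma}=\varepsilon_1(r_{\alpha,\gamma}+1)$, which is the $k=1$ term. For the inductive step, multiply the $m$--identity on the left by $x_\alpha\otimes a$ and commute it past each $x_{\gamma+k\alpha}\otimes a^kb$ via $[x_\alpha,x_{\gamma+(s-1)\alpha}]=\varepsilon_s(r_{\alpha,\gamma}+s)x_{\gamma+s\alpha}$, using \eqref{xbxb} on the trailing power of $x_\alpha$. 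Writing $C_{m,k}$ for the coefficient of the $k$--th term, one gets the recursion $(m+1)C_{m+1,k}=(m+1-k)C_{m,k}+\varepsilon_k(r_{\alpha,\gamma}+k)C_{m,k-1}$, which closes on the claimed value $C_{m,k}=\left(\prod_{s=1}^k\varepsilon_s\right)\binom{r_{\alpha,\gamma}+k}{k}$ by the absorption identity $k\binom{r_{\alpha,\gamma}+k}{k}=(r_{\alpha,\gamma}+k)\binom{r_{\alpha,\gamma}+k-1}{k-1}$. The hypothesis $\alpha\neq\pm2\gamma$ guarantees that the $\alpha$--string through $\gamma$ carries these honest integer constants and does not collapse into the situation of \eqref{xgamx-2gam^m}.

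The main obstacle is the structure--constant identity $c_{\gamma,-2\gamma}=-z_\gamma\gamma(h_\gamma)$ underpinning \eqref{xgamx-2gam^m}: it is the one place where the genuinely super structure (the odd--odd bracket $[x_\gamma,x_\gamma]=c_{\gamma,\gamma}x_{2\gamma}$ and the precise Chevalley normalization of $h_\gamma$ versus $h_{2\gamma}$) must be invoked rather than formal manipulation, and pinning down the sign and the factor of $2$ there is the delicate point; everything else is an induction supported by standard binomial identities.
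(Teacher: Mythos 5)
Your proposal is correct and follows essentially the same route as the paper: \eqref{xgammapi} is quoted from Lemma \ref{xdeltapi}, the degree-two identities \eqref{xgam^2}--\eqref{xgammaxdelta} are direct computations with the superbracket, and \eqref{xgamx-2gam^m} and \eqref{xalphmxgam} are handled by induction on $m$, where your coefficient recursion $(m+1)C_{m+1,k}=(m+1-k)C_{m,k}+\varepsilon_k(r_{\alpha,\gamma}+k)C_{m,k-1}$ and the absorption identity are exactly what the paper's displayed computation for \eqref{xalphmxgam} implements. The only divergence is that you give more detail than the paper on \eqref{xgamx-2gam^m} (which the paper dismisses as an easy induction): your super-Jacobi/$\osp(1,2)$ determination of $c_{\gamma,-2\gamma}$ is the right way to pin down the base-case constant, with the precise sign and factor depending on the Chevalley-basis normalization that the paper itself defers to the cited references.
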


\begin{proof}
\eqref{xgammapi} is simply Lemma \ref{xdeltapi}. \eqref{xgam^2}, \eqref{xgamx-gam}, and \eqref{xgammaxdelta} are all easily verified by direct computation using Proposition \ref{roots}. \eqref{xgamx-2gam^m} can be easily proved by induction on $m$. We will prove \eqref{xalphmxgam} by induction on $m$. The base case $m=1$ is trivial. For the inductive step we have
\begin{eqnarray*}
&&\hskip-.2in(m+1)\left(x_\alpha\otimes a\right)^{(m+1)}\left(x_\gamma\otimes c\right)\\
&=&\left(x_\alpha\otimes a\right)\left(\left(x_\gamma\otimes c\right)\left(x_\alpha\otimes a\right)^{(m)}
+\sum_{k=1}^m\left(\prod_{s=1}^k\varepsilon_s\right)\binom{r_{\alpha,\gamma}+k}{k}\left(x_{\gamma+k\alpha}\otimes a^kc\right)
\left(x_\alpha\otimes a\right)^{(m-k)}\right)\\
&&(\textnormal{by the induction hypothesis})\\
&=&(m+1)\left(x_\gamma\otimes c\right)\left(x_\alpha\otimes a\right)^{(m+1)}
+\varepsilon_1(r_{\alpha,\gamma}+1)\left(x_{\gamma+\alpha}\otimes ac\right)\left(x_\alpha\otimes a\right)^{(m)}\\
&+&\sum_{k=1}^m\left(\prod_{s=1}^k\varepsilon_s\right)\binom{r_{\alpha,\gamma}+k}{k}
\left(x_{\gamma+k\alpha}\otimes a^kc\right)(m-k+1)\left(x_\alpha\otimes a\right)^{(m-k+1)}\\
&+&\sum_{k=1}^m\left(\prod_{s=1}^k\varepsilon_s\right)\binom{r_{\alpha,\gamma}+k}{k}\varepsilon_{k+1}(r_{\alpha,\gamma}+k+1)
\left(x_{\gamma+(k+1)\alpha}\otimes a^{k+1}c\right)\left(x_\alpha\otimes a\right)^{(m-k)}\\
&=&(m+1)\left(x_\gamma\otimes c\right)\left(x_\alpha\otimes a\right)^{(m+1)}
+\varepsilon_1(r_{\alpha,\gamma}+1)\left(x_{\gamma+\alpha}\otimes ac\right)\left(x_\alpha\otimes a\right)^{(m)}\\
&+&\sum_{k=1}^m\left(\prod_{s=1}^k\varepsilon_s\right)\binom{r_{\alpha,\gamma}+k}{k}
\left(x_{\gamma+k\alpha}\otimes a^kc\right)(m-k+1)\left(x_\alpha\otimes a\right)^{(m-k+1)}\\
&+&\sum_{k=2}^{m+1}\left(\prod_{s=1}^{k-1}\varepsilon_s\right)\binom{r_{\alpha,\gamma}+k}{k}k\varepsilon_{k}
\left(x_{\gamma+k\alpha}\otimes a^{k}c\right)\left(x_\alpha\otimes a\right)^{(m-k+1)}\\
&=&(m+1)\left(x_\gamma\otimes c\right)\left(x_\alpha\otimes a\right)^{(m+1)}
+\varepsilon_1(r_{\alpha,\gamma}+1)\left(x_{\gamma+\alpha}\otimes ac\right)\left(x_\alpha\otimes a\right)^{(m)}\\
&+&m\varepsilon_1(r_{\alpha,\gamma}+1)\left(x_{\gamma+\alpha}\otimes ac\right)\left(x_\alpha\otimes a\right)^{(m)}\\
&+&(m+1)\sum_{k=2}^m\left(\prod_{s=1}^k\varepsilon_s\right)\binom{r_{\alpha,\gamma}+k}{k}
\left(x_{\gamma+k\alpha}\otimes a^kc\right)\left(x_\alpha\otimes a\right)^{(m-k+1)}\\
%&=&(m+1)\left(x_\gamma\otimes c\right)\left(x_\alpha\otimes a\right)^{(m+1)}
%+(m+1)\varepsilon_1(r_{\alpha,\gamma}+1)\left(x_{\gamma+\alpha}\otimes ac\right)\left(x_\alpha\otimes a\right)^{(m)}\\
%&+&(m+1)\sum_{k=2}^m\left(\prod_{s=1}^k\varepsilon_s\right)\binom{r_{\alpha,\gamma}+k}{k}
%\left(x_{\gamma+k\alpha}\otimes a^kc\right)\left(x_\alpha\otimes a\right)^{(m-k+1)}\\
&+&(m+1)\left(\prod_{s=1}^{m+1}\varepsilon_s\right)\binom{r_{\alpha,\gamma}+m+1}{m+1}\left(x_{\gamma+(m+1)\alpha}\otimes a^{m+1}c\right)\\
&=&(m+1)\left(x_\gamma\otimes c\right)\left(x_\alpha\otimes a\right)^{(m+1)}
+(m+1)\sum_{k=1}^{m+1}\left(\prod_{s=1}^k\varepsilon_s\right)\binom{r_{\alpha,\gamma}+k}{k}
\left(x_{\gamma+k\alpha}\otimes a^kc\right)\left(x_\alpha\otimes a\right)^{(m-k+1)}
\end{eqnarray*}
\end{proof}

\begin{rem}
\begin{enumerate}
\item In the cases $\fg\in\{A(1,1),P(3)\}$ these identities are essentially the same. We simply need to replace the roots with the corresponding elements in the new index set for the roots, \cite{BK, FG, IK}.\\

\item In the cases $\fg=Q(n)$ we will need to replace the roots with the corresponding elements in the new index set for the roots and add a few new identities. These new identities can shown by straightforward calculations or proved by simple inductions, \cite{BK, FG, IK}.
\end{enumerate}
\end{rem}

\section{Proof of the Main Theorem and A Triangular Decomposition}

In this section we will prove Theorem \ref{thm} and give a triangular decomposition of $\bu_\Z(\fg\otimes A)$. The proof will proceed by induction on the degree of monomials in $\bu_\Z(\fg\otimes A)$ (using the definition of degree in Section 2.1) and the following lemmas.

\subsection{}

\begin{lem}\label{brack}
For all $\alpha,\beta,\gamma\in R_0$ $(\beta\neq-\gamma)$, $\delta,\zeta\in R_1$ $i\in I$, $\chi\in\f(\bb)$, $a,b\in\bb$, and $r,s\in\Z_{\geq0}$.
\begin{itemize}
\item [(1)] $\left[\left(x_\alpha\otimes a\right)^{(r)},\left(x_{-\alpha}\otimes b\right)^{(s)}\right]\in\Z$--span $\B$ and has degree less than $r+s$.\\

\item [(2)] $\left[\left(x_\beta\otimes a\right)^{(r)},p_i(\chi)\right]\in\Z$--span $\B$ and has degree less than $r+|\chi|$.\\

\item [(3)] $\left[\left(x_\beta\otimes a\right)^{(r)},\left(x_\gamma\otimes b\right)^{(s)}\right]$ has degree less than $r+s$.\\

\item [(4)] $\left[\left(x_\delta\otimes a\right),p_i(\chi)\right]$ has degree less than $|\chi|+1$.\\

\item [(5)] $\left[\left(x_\beta\otimes a\right)^{(r)},\left(x_\delta\otimes b\right)\right]$ has degree less than $r+1$.\\

\item [(6)] $\left[\left(x_\delta\otimes a\right),\left(x_\zeta\otimes b\right)\right]$ has degree less than $2$.\\

\item [(7)] $\left[\left(x_{\pm\gamma}\otimes a\right),\left(x_{\mp2\gamma}\otimes b\right)^{(m)}\right]$ has degree less than $m+1$.
\end{itemize}
\end{lem}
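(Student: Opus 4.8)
The plan is to deduce all seven parts directly from the straightening identities of Section 4, exploiting the fact that in each relevant identity the highest-degree term is precisely the product of the two factors in the \emph{opposite} order, so that passing to the (super)commutator annihilates that leading term and leaves only strictly lower-degree corrections. I would match the parts to identities as follows: (1) to \eqref{x+x-}; (2) to \eqref{x+rpi} and its mirror \eqref{pix-r}; (3) to \eqref{xaxb} (equivalently Lemma \ref{pmbasislem}); (4) to \eqref{xgammapi} (Lemma \ref{xdeltapi}); (5) to \eqref{xalphmxgam}; (6) to \eqref{xgam^2}, \eqref{xgamx-gam}, \eqref{xgammaxdelta}; and (7) to \eqref{xgamx-2gam^m}. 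In (6) the two factors are both odd, so $[\,\cdot,\cdot\,]$ is the anticommutator, and the three cited identities show it equals a single term ($c_{\delta,\zeta}(x_{\delta+\zeta}\otimes ab)$, $h_\gamma\otimes ab$, or a multiple of $x_{2\delta}\otimes ab$) of degree $1<2$; everywhere else the bracket is the ordinary commutator.

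The first substantive step is to isolate the cancelling leading term in each identity. In \eqref{x+x-} the index $(j,k,m)=(0,0,0)$ gives $D^{-\alpha}_{0,s}(ab,b)\,p_\alpha(0)\,D^{\alpha}_{0,r}(ab,a)$; since $\cp_s(0)=\{s\chi_0\}$ forces $D^{-\alpha}_{0,s}(ab,b)=(x_{-\alpha}\otimes b)^{(s)}$ and likewise $D^{\alpha}_{0,r}(ab,a)=(x_{\alpha}\otimes a)^{(r)}$, with $p_\alpha(0)=1$, this term is exactly $(x_{-\alpha}\otimes b)^{(s)}(x_\alpha\otimes a)^{(r)}$. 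In \eqref{x+rpi} the index $\psi=r\chi_0$ (all weight on $\phi=0$, where the binomial and $\m$ factors are $1$) gives $p_i(\chi)(x_\beta\otimes b)^{(r)}$; in \eqref{xaxb} the empty index $\psi=0$ gives $(x_\beta\otimes b)^{(s)}(x_\gamma\otimes a)^{(r)}$; in \eqref{xgammapi} the index $\psi=0$ gives $p_i(\chi)(x_\gamma\otimes a)$; and in \eqref{xalphmxgam} the separated summand $(x_\gamma\otimes b)(x_\alpha\otimes a)^{(m)}$ is the leading term. Subtracting this term in each case is precisely the commutator appearing on the left of the corresponding part.

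The degree bounds then follow from a uniform degree count on the surviving summands, using that a divided power $(x\otimes a)^{(t)}$ has degree $t$ and $p_i(\varphi)$ has degree $|\varphi|$ (Proposition \ref{degp}). For \eqref{x+x-} a general surviving term has degree $(s-j-k-m)+k+(r-j-k-m)=r+s-2j-k-2m\le r+s-1$; for \eqref{xaxb} a term indexed by $\psi$ has degree $r+s-\sum_{(j,k)}\psi(j,k)(j+k-1)<r+s$ because each $j,k\ge 1$; for \eqref{x+rpi} and \eqref{xgammapi} the surviving terms lose $\sum_\phi\psi(\phi)|\phi|\ge 1$ (respectively $|\psi|\ge 1$) in the Cartan factor, giving the bounds $r+|\chi|$ and $|\chi|+1$; and for \eqref{xalphmxgam} and \eqref{xgamx-2gam^m} the surviving terms carry one fewer even divided power, giving degree $\le m<m+1$. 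In every case this is strictly below the asserted bound, which settles the degree assertions of all seven parts.

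The remaining content is the claim in (1) and (2) that the commutators lie in the $\Z$-span of $\B$, and this is where the genuine difficulty sits. Each surviving summand is already a product of generators from $M$: the $D^{\pm\alpha}$ and the products $\prod_\phi(x_\alpha\otimes b\pi(\phi))^{(\psi(\phi))}$ collapse, via \eqref{xbxb} and the commutativity of equal-root divided powers, into single factors of the form $X_{\pm\alpha}(\cdot)$ with arguments in $\f(\bb)$ (here one uses that $\bb$ is closed under multiplication, so $(ab)^m b$, $b\pi(\phi)$, and the like lie in $\bb$), while the central factor $p_\alpha(k\chi_{ab})$ is rewritten through the generators $p_i$ of the Cartan part. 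The obstruction is that for an \emph{arbitrary} total order $(\preccurlyeq,R\cup I)$ these products need not already be in $\preccurlyeq$-order, and $p_i$ does not commute with $x_{\pm\alpha}$, so one cannot read off a $\B$-expansion term by term. The way I would close this is to feed the degree bound just proved into the degree induction carrying the proof of Theorem \ref{thm}: every surviving summand has degree strictly below $r+s$ (respectively $r+|\chi|$), hence by the inductive hypothesis already lies in the $\Z$-span of $\B$, and therefore so does the commutator. I expect this interlocking of the membership claim with the main induction to be the principal obstacle, since it is exactly what stops (1) and (2) from being immediate corollaries of the straightening identities.
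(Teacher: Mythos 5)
Your proposal is correct, and its core is the same as the paper's: the same matching of parts to identities \eqref{x+x-}, \eqref{x+rpi}/\eqref{pix-r}, \eqref{xaxb}, \eqref{xgammapi}, \eqref{xalphmxgam}, \eqref{xgamx-gam}/\eqref{xgammaxdelta}/\eqref{xgam^2}, \eqref{xgamx-2gam^m}; the same isolation of the single summand ($(j,k,m)=(0,0,0)$, $\psi=r\chi_0$, $\psi=0$, etc.) that cancels against the opposite-order product; and the same degree counts (your count for part (3), $r+s-\sum\psi(j,k)(j+k-1)$, is in fact stated more accurately than the paper's).

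The one genuine divergence is how the $\Z$--span~$\B$ membership in parts (1) and (2) is justified. The paper asserts it directly: for (1) it claims the correction terms $D^{-\alpha}_{j,\cdot}\,p_\alpha(k\chi_{ab})\,D^{\alpha}_{m,\cdot}$ are in $\Z$--span~$\B$ ``by the definition of $D$,'' and for (2) it claims the factors produced by \eqref{x+rpi} and \eqref{pix-r} ``are in the order specified by $(\preccurlyeq,R\cup I)$.'' You instead flag exactly why these assertions are loose --- for an arbitrary total order the factors need not be $\preccurlyeq$-ordered, and $p_\alpha$ for non-simple $\alpha$ is not itself a generator --- and you close the gap by feeding the degree bound into the induction carrying the proof of Theorem \ref{thm}. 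This is legitimate and non-circular, because the theorem's induction consumes from Lemma \ref{brack} only the degree bounds together with the fact that the corrections are $\Z$-combinations of monomials, never the $\B$-membership claims of (1) and (2) themselves; those claims then fall out of the induction a posteriori, exactly as you say. What the paper's route buys is a self-contained lemma (at the price of an unaddressed ordering issue); what your route buys is a proof valid for every order $\preccurlyeq$, at the price of making (1) and (2) corollaries of the main induction rather than standalone statements. One caveat applies to both arguments equally: deferring to the induction still requires each correction term to be a $\Z$-combination of products of generators, hence requires $p_\alpha(k\chi_{ab})\in\bu_\Z(\fg\otimes A)$ for non-simple $\alpha\in R_0$; you wave at this (``rewritten through the generators $p_i$'') just as the paper does, and a fully rigorous treatment would cite the corresponding integrality statement for the even part from \cite{C} or prove it by the Garland-type argument via \eqref{x+x-}.
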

\begin{proof}
For $(1)$ by \eqref{x+x-} we have
\begin{eqnarray*}
\left(x_\alpha\otimes a\right)^{(r)}\left(x_{-\alpha}\otimes b\right)^{(s)}
%&=&\sum_{\substack{j,k,m\in\Z_{\geq0}\\j+k+m\leq\min(r,s)}}(-1)^{j+k+m}D^{-\alpha}_{j,s-j-k-m}(ab,b)p_\alpha\left(k\chi_{ab}\right)
%D^\alpha_{m,r-j-k-m}(ab,a)\\
&=&\sum_{\substack{j,k,m\in\Z_{\geq0}\\0<j+k+m\leq\min(r,s)}}(-1)^{j+k+m}D^{-\alpha}_{j,s-j-k-m}(ab,b)p_\alpha\left(k\chi_{ab}\right)
D^\alpha_{m,r-j-k-m}(ab,a)\\
&+&\left(x_{-\alpha}\otimes b\right)^{(s)}\left(x_\alpha\otimes a\right)^{(r)}\hskip.2in(\textnormal{by definition } D^{\beta}_{0,n}(d,c)=\left(x_\beta\otimes c\right)^{(n)})
\end{eqnarray*}
By the definition of $D^{\beta}_{j,k}(d,c)$, $D^{-\alpha}_{j,s-j-k-m}(ab,b)p_\alpha\left(k\chi_{ab}\right)D^\alpha_{m,r-j-k-m}(ab,a)\in\Z$--span $\B$. Either the sum on the right-hand side is zero or its terms have degree $r+s-2j-k-2m<r+s$.

For $(2)$ by \eqref{x+rpi} we have
\begin{eqnarray*}
&&\hskip-.4in\left(x_\alpha\otimes a\right)^{(r)}p_i(\chi)\\
%&=&\sum_{\psi\in\cs_r(\chi)}p_i\left(\chi-\sum_{\phi\in\f}\psi(\phi)\phi\right)\prod_{\phi\in\f}\left(\binom{\alpha(h_i)+|\phi|-1}{|\phi|}\m(\phi)
%\left(x_\alpha\otimes a\pi(\phi)\right)\right)^{(\psi(\phi))}\\
&=&\sum_{\psi\in\cs_r(\chi)-\{r\chi_0\}}p_i\left(\chi-\sum_{\phi\in\f}\psi(\phi)\phi\right)\prod_{\phi\in\f}\left(\binom{\alpha(h_i)+|\phi|-1}{|\phi|}
\m(\phi)\left(x_\alpha\otimes a\pi(\phi)\right)\right)^{(\psi(\phi))}\\
&+&p_i(\chi)\left(x_\alpha\otimes a\right)^{(r)}
\end{eqnarray*}
and by \eqref{pix-r} we have
\begin{eqnarray*}
&&\hskip-.4in p_i(\chi)\left(x_{-\alpha}\otimes a\right)^{(r)}\\
&=&\sum_{\psi\in\cs_r(\chi)-\{r\chi_0\}}\prod_{\phi\in\f}\left(\binom{\alpha(h_i)+|\phi|-1}{|\phi|}
\m(\phi)\left(x_{-\alpha}\otimes a\pi(\phi)\right)\right)^{(\psi(\phi))}p_i\left(\chi-\sum_{\phi\in\f}\psi(\phi)\phi\right)\\
&+&\left(x_{-\alpha}\otimes a\right)^{(r)}p_i(\chi)
\end{eqnarray*}
In both cases the factors are in the order specified  by $(\preccurlyeq,R\cup I)$ and either the sums on the right-hand sides are zero or by Proposition \ref{degp} the sums on the right-hand sides have degree
$$\left|\chi-\sum_{\phi\in\f}\psi(\phi)\phi\right|+|\psi|=|\chi|+r-\sum_{\phi\in\f}\psi(\phi)|\phi|<|\chi|+r$$
since $\psi\neq r\chi_0$.

For $(3)$ by \eqref{xaxb} we have
\begin{eqnarray*}
&&\hskip-.4in\left(x_\beta\otimes a\right)^{(r)}\left(x_{\gamma}\otimes b\right)^{(s)}\\
%&=&\sum_{\substack{\psi\in\f\left(\Z_{>0}^2\right)\\ r\geq\sum j\psi(j,k)\\ s\geq\sum k\psi(j,k)}}\varepsilon_\psi
%\left(x_{\gamma}\otimes b\right)^{\left(s-\sum k\psi(j,k)\right)}\prod_{(j,k)\in\supp\psi}\left(x_{j\beta+k\gamma}\otimes a^jb^k\right)^{(\psi(j,k))}\left(x_{\beta}\otimes a\right)^{\left(r-\sum j\psi(j,k)\right)}\\
&=&\sum_{\substack{\psi\in\f\left(\Z_{>0}^2\right)-\{0\}\\ r\geq\sum j\psi(j,k)\\ s\geq\sum k\psi(j,k)}}\varepsilon_\psi
\left(x_{\gamma}\otimes b\right)^{\left(s-\sum k\psi(j,k)\right)}\prod_{(j,k)\in\supp\psi}\left(x_{j\beta+k\gamma}\otimes a^jb^k\right)^{(\psi(j,k))}
\left(x_{\beta}\otimes a\right)^{\left(r-\sum j\psi(j,k)\right)}\\
&+&\left(x_{\gamma}\otimes b\right)^{(s)}\left(x_{\beta}\otimes a\right)^{(r)}
\end{eqnarray*}
Either the sum on the right-hand side is zero or its terms have degree
$$r+s-|\psi|-\sum_{(j,k)\in\supp\psi}\left(j\psi(j,k)+k\psi(j,k)\right)<r+s$$

For (4) by Lemma \ref{xdeltapi} we have
\begin{eqnarray*}
\left(x_\delta\otimes a\right)p_i(\chi)
%&=&\sum_{\psi\in\f(\chi)}\binom{|\psi|-1+\delta(h_i)}{|\psi|}\m(\psi)p_i(\chi-\psi)\left(x_\delta\otimes a\pi(\psi)\right)\\
&=&\sum_{\psi\in\f(\chi)-\{0\}}\binom{|\psi|-1+\delta(h_i)}{|\psi|}\m(\psi)p_i(\chi-\psi)\left(x_\delta\otimes a\pi(\psi)\right)
+p_i(\chi)\left(x_\delta\otimes a\right)
\end{eqnarray*}
Either the sum on the right-hand side is zero or its terms have degree $|\chi|-|\psi|+1<|\chi|+1$.

For (5), (6) and (7) just apply \eqref{xgamx-gam}, \eqref{xgammaxdelta}, \eqref{xgamx-2gam^m}, and \eqref{xalphmxgam} as needed.
\end{proof}

\begin{lem}\label{degpi}
For all $i\in I$ and $\chi,\varphi\in\f(\bb)$
$$p_i(\chi)p_i(\varphi)=\prod_{a\in A}\binom{(\chi+\varphi)(a)}{\chi(a)}p_i(\chi+\varphi)+u$$
where $u$ is in $\Z-\span\left\{p_i(\psi):\psi\in\f(\bb)\right\}$ with $\deg u<|\chi|+|\varphi|$.
\end{lem}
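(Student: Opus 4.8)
The plan is to split the statement into a leading-term computation, which pins down the coefficient $\prod_a\binom{(\chi+\varphi)(a)}{\chi(a)}$ of $p_i(\chi+\varphi)$ and the degree bound on $u$, and a separate integrality argument, which is where the real work lies. Since $\{h_i\}\otimes A$ is abelian, all the factors $h_i\otimes a$ commute, so the product stays inside the commutative algebra $\bu(\{h_i\}\otimes A)$ and one can argue entirely there.

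First I would compute the top-degree part. By Proposition \ref{degp}\eqref{degpalpha} we have $p_i(\chi)=(-1)^{|\chi|}\prod_{a}(h_i\otimes a)^{(\chi(a))}$ plus elements of $\bu(\{h_i\}\otimes A)$ of degree less than $|\chi|$, and likewise for $\varphi$ and for $\chi+\varphi$. Multiplying the two leading monomials and applying the divided-power rule $(h_i\otimes a)^{(\chi(a))}(h_i\otimes a)^{(\varphi(a))}=\binom{(\chi+\varphi)(a)}{\chi(a)}(h_i\otimes a)^{((\chi+\varphi)(a))}$ (the Cartan analogue of \eqref{xbxb}) factor by factor shows that the degree-$(|\chi|+|\varphi|)$ part of $p_i(\chi)p_i(\varphi)$ is exactly $\prod_{a}\binom{(\chi+\varphi)(a)}{\chi(a)}$ times the leading monomial $(-1)^{|\chi+\varphi|}\prod_a(h_i\otimes a)^{((\chi+\varphi)(a))}$ of $p_i(\chi+\varphi)$. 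Hence $u:=p_i(\chi)p_i(\varphi)-\prod_{a}\binom{(\chi+\varphi)(a)}{\chi(a)}p_i(\chi+\varphi)$ has degree strictly less than $|\chi|+|\varphi|$. By Remark \ref{hbasis} the set $\{p_i(\psi):\psi\in\f(\bb)\}$ is a $\C$-basis of $\bu(\{h_i\}\otimes A)$, and comparing degrees shows $u$ is a $\C$-linear combination of the $p_i(\psi)$ with $|\psi|<|\chi|+|\varphi|$. This already yields the formula and the degree statement over $\C$.

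It remains to prove that the coordinates of $u$ are integers, and this is the crux. The element $u$ certainly lies in $\bu_\Z(\fg\otimes A)$, since $p_i(\chi),p_i(\varphi),p_i(\chi+\varphi)$ are among the generators and the $\binom{(\chi+\varphi)(a)}{\chi(a)}$ are integers; the content is thus that membership in $\bu_\Z(\fg\otimes A)$ forces the $p_i$-coordinates of a Cartan element to be integral. A natural attempt is induction on $|\chi|$ (for all $\varphi$): applying $\Omega_{\alpha_i}$ to the recursive definition of $p(\chi)$ gives $|\chi|\,p_i(\chi)=-\sum_{\psi\in\f(\chi)-\{0\}}\m(\psi)(h_i\otimes\pi(\psi))\,p_i(\chi-\psi)$, and multiplying on the right by $p_i(\varphi)$, using the inductive hypothesis on each lower product $p_i(\chi-\psi)p_i(\varphi)$ and then rewriting $(h_i\otimes\pi(\psi))=-p_i(\chi_{\pi(\psi)})$ via Proposition \ref{degp}\eqref{palpha1} (legitimate because $\bb$ is closed under multiplication, so $\pi(\psi)\in\bb$), expresses $|\chi|\,p_i(\chi)p_i(\varphi)$ as an explicit $\Z$-linear combination of the $p_i(\psi)$.

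The genuine obstacle is the prefactor $1/|\chi|$: the recursion only gives $|\chi|\,p_i(\chi)p_i(\varphi)\in\Z$-span$\{p_i(\psi)\}$, so by linear independence of the $p_i(\psi)$ the coordinates $c_\psi$ of the product lie a priori only in $\tfrac{1}{|\chi|}\Z$, and one must prove the resulting integer combination is divisible by $|\chi|$. I expect this divisibility to follow from a combinatorial identity among the $\m(\psi)$ and the binomial coefficients, in the spirit of \eqref{comb} in the proof of Lemma \ref{xdeltapi}. The route I would ultimately pursue, however, bypasses the cancellation altogether by deriving integrality from straightening identities already known to have coefficients in $\Z$: realize $p_i$ through even root vectors using the congruence $X_\alpha(|\psi|\chi_1)X_{-\alpha}(\psi)\equiv(-1)^{|\psi|}p_\alpha(\psi)\bmod \bu(\fg\otimes A)(x_\alpha\otimes A)$ recorded earlier, and compute the product of the corresponding divided powers with the manifestly integral identity \eqref{x+x-}, whose right-hand side isolates the Cartan contribution as an integral combination of the $p_\alpha$; extracting that Cartan part then gives the $c_\psi\in\Z$ directly. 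Since the even-Cartan part here is identical to the non-super setting, one may alternatively cite the corresponding even-generator lemma of \cite{C}.
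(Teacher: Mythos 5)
Your leading-term computation is correct and coincides with the paper's: Proposition \ref{degp}\eqref{degpalpha} pins down the coefficient $\prod_{a}\binom{(\chi+\varphi)(a)}{\chi(a)}$ and the bound $\deg u<|\chi|+|\varphi|$, and Remark \ref{hbasis} places $u$ in the $\C$-span of the $p_i(\psi)$. Moreover, your closing fallback is, in substance, the paper's entire proof: the paper obtains $p(\chi)p(\varphi)\in\Z\text{-span}\{p(\psi):\psi\in\f(\bb)\}$ inside $\bu(h\otimes A)$ by adapting the proof of Lemma 9.2 of \cite{Gar}, and then applies the injective homomorphism $\Omega_{\alpha_i}$ (so the parity of $\alpha_i$ never enters) together with Proposition \ref{degp}\eqref{degpalpha}; citing the corresponding Cartan lemma of \cite{C} is the same move.

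The route you say you would ultimately pursue, however, has a genuine gap: it is circular. Write $J:=\bu(\fg\otimes A)(x_\alpha\otimes A)$ for the left ideal in the congruence. Multiplying such congruences is indeed legitimate, since Lemma \ref{xdeltapi} gives $J\,\bu(\{h_\alpha\}\otimes A)\subset J$. But examine what the integral identities \eqref{x+x-}, \eqref{x+rpi}, \eqref{pix-r} and Lemma \ref{xdeltapi} can actually produce: each application either moves existing Cartan factors past root vectors or creates one new factor of the form $p_\alpha(k\chi_{ab})$; none of them ever merges two Cartan factors into one. Consequently, when you straighten any product of divided powers into PBW order and project along the decomposition $\bu=\bu(\{h_\alpha\}\otimes A)\oplus\left((x_{-\alpha}\otimes A)\bu+\bu(x_\alpha\otimes A)\right)$ --- a projection that fixes $p_\alpha(\chi)p_\alpha(\varphi)$, since that element is already Cartan --- the output is an identity expressing $p_\alpha(\chi)p_\alpha(\varphi)$ as a $\Z$-combination of \emph{products of several} $p_\alpha$'s, with two-factor products surviving in top degree; re-expanding those products in the basis $\{p_\alpha(\psi)\}$ is precisely the lemma being proved. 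A concrete check: for $\chi=\chi_a$, $\varphi=\chi_b$ your procedure closes up into the tautology $p_\alpha(\chi_a)p_\alpha(\chi_b)\equiv p_\alpha(\chi_b)p_\alpha(\chi_a)\pmod J$ (the two correction terms $\pm2p_\alpha(\chi_{ab})$ produced by Lemma \ref{xdeltapi} cancel), whereas the desired answer, $p_\alpha(\chi_a)p_\alpha(\chi_b)=p_\alpha(\chi_a+\chi_b)-p_\alpha(\chi_{ab})$, comes from the recursion defining $p$, i.e.\ from commutative algebra in $\bu(h\otimes A)$, which the $\mathfrak{sl}_2$-straightening never touches. The divisibility by $|\chi|$ that you flag in your inductive attempt is the same obstruction in different clothing: it is not a removable technicality but is equivalent to the lemma itself. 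So the two honest options are exactly the ones the paper takes: reproduce Garland's Lemma 9.2 argument for the elements $p(\chi)$, or cite it (equivalently, cite \cite{C}).
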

\begin{proof}
Adapting the proof of Lemma 9.2 in \cite{Gar} shows that if $\chi,\varphi\in\f(\bb)$ then $p(\chi)p(\varphi)\in\Z-\span\left\{p(\psi):\psi\in\f(\bb)\right\}$.  Applying $\Omega_{\alpha_i}$ and using Proposition \ref{degp}\eqref{degpalpha} for the degree gives the result.
\end{proof}

\noindent\emph{Proof of Theorem \ref{thm}.} Fix an order $(\preccurlyeq,R\cup I)$. Since $\bb$, $\{p_\alpha(\chi):\chi\in\f(\bb)\}$, and $\{x_\alpha\}_{\alpha\in R}\cup\{h_i\}_{i\in I}$ are $\C$-bases for $A$, $\bu\left(\left\{h_\alpha\right\}\otimes A\right)$, and $\fg$ respectively the Poincar\'{e}-Birkhoff-Witt Theorem for Lie superablgebras implies that $\B$ is a $\C$--linearly independent set. Hence $\B$ is a $\bbz$--linearly independent set.

The proof that the $\Z$-span of $\B$ is $\bu_\Z(\fg\otimes A)$ will proceed by induction on the degree of monomials in $\bu_\Z(\fg\otimes A)$. Since $p_i(\chi_a)=-(h_i\otimes a)$ any degree one monomial is in the $\Z$-span of $\B$. Take any monomial $m$ in $\bu_\Z(\fg\otimes A)$. If $m\in\B$ then we are done. If not then either the factors of $m$ are not in the order specified by $\preccurlyeq$, or $m$ has multiples of factors with the following forms
\begin{eqnarray}
\left(x_\alpha\otimes b\right)^{(r)}\textnormal{ and }\left(x_\alpha\otimes b\right)^{(s)}&,&\alpha\in R_0,\ b\in\bb,\ r,s\in\Z_{\geq0}\label{facxbxb}\\
p_i(\chi)\textnormal{ and }p_i(\psi)&,&i\in I,\ \chi,\psi\in\f(\bb)\label{facpipi}\\
\left(x_\gamma\otimes c\right)^{j}&,&\gamma\in R_1,\ c\in\bb,\ j\in\Z_{\geq0}\label{facxgam}.
\end{eqnarray}
If the factors of $m$ are not in the order given by $\preccurlyeq$ then we can rearrange the factors of $m$ using the straightening identities in Propositions \ref{strteven} and \ref{strtodd}. Once this is done Lemma \ref{brack} guarantees that each rearrangement will only produce $\Z$-linear combinations of monomials in the correct order with lower degree. These lower degree monomials are then in the $\Z$-span of $\B$ by the induction hypothesis.

If (possibly after rearranging factors as above) $m$ contains the products of the pairs of factors in \eqref{facxbxb} or \eqref{facpipi} we apply \eqref{xbxb} or Lemma \ref{degpi} respectively to consolidate these pairs of factors into single factors with integral coefficients. If $m$ contains a power of factors as in \eqref{facxgam} we apply \eqref{xgam^2} to get
$$\left(x_\gamma\otimes c\right)^{j}=\left(x_\gamma\otimes c\right)^{2k+\varepsilon}=z\left(x_{2\gamma}\otimes c^2\right)^{k}\left(x_\gamma\otimes c\right)^{\varepsilon}=zk!\left(x_{2\gamma}\otimes c^2\right)^{(k)}\left(x_\gamma\otimes c\right)^{\varepsilon}$$
where $z\in\Z$, $k\in\Z_{\geq0}$, $\varepsilon\in\{0,1\}$. We might need to rearrange the factors again using Lemmas \ref{brack} and \ref{degpi} after this step. In the end we see that $m\in\Z$--$\span\B$. Thus the $\Z$-span of $\B$ is $\bu_\Z(\fg\otimes A)$ and hence $\B$ is an integral basis for $\bu_\Z(\fg\otimes A)$.

In the case $2\gamma\in R$ for some $\gamma\in R$ we also have a total order $(\precsim,\bb)$ (see remark \ref{bborder}). Using the identity \eqref{xgammaxdelta} with $\delta=\gamma$ we can reorder the pairs of factors of the form $\left(x_\gamma\otimes a\right)\left(x_\gamma\otimes b\right)$ as required by $\precsim$. With each reordering although new monomials are be created they will have lower degree and hence will be in the $\Z$-span of $\B$ by the induction hypothesis. \hskip4.5in $\square$

\subsection{}

Let  $\bu_{\Z}^\pm(\fg \otimes A)$ denote the $\Z$-subalgebras of $\bu_{\Z}(\fg \otimes A)$ generated, respectively, by
$$\{(x_{\alpha}\otimes a)^{(r)},x_\gamma\otimes c:\alpha\in R_0^\pm,\ \gamma\in R_1^\pm,\ a,c\in\bb,\ r\in\Z_{\geq 0}\}.$$
Let $\bu_{\Z}^{0}(\fg \otimes A)$  denote the $\Z$-subalgebra of $\bu_{\Z}(\fg\otimes A)$ generated by
$$\{p_i(\chi):\chi\in\mathcal{F}(\bb),i\in I\}.$$

Then as a corollary to Theorem \ref{thm} we obtain the following triangular decomposition of $\bu_\Z(\fg\otimes A)$.

\begin{cor}
$$\bu_{\Z}(\fg \otimes A)=\bu_{\Z}^{-}(\fg \otimes A)\bu_{\Z}^{0}(\fg \otimes A)\bu_{\Z}^{+}(\fg \otimes A)$$
\end{cor}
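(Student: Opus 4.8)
The plan is to read this off directly from Theorem~\ref{thm} by a judicious choice of the total order $(\preccurlyeq,R\cup I)$, in the same spirit as the preceding Corollary. First I would fix $\preccurlyeq$ so that $R^-\preccurlyeq I\preccurlyeq R^+$; that is, every negative root (even or odd) precedes every index in $I$, and every index in $I$ precedes every positive root, the order inside each of these three blocks being irrelevant. When $2\gamma\in R_0$ for some $\gamma\in R_1$ I would also fix the auxiliary order $(\precsim,\bb)$ required in Remark~\ref{bborder}.

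With this choice Theorem~\ref{thm} produces the $\Z$-basis $\B$, and by construction each element of $\B$ splits into three consecutive segments: a left segment built from the factors $X_\alpha(\chi_\alpha)$ and $X_\gamma(\psi_\gamma)$ with $\alpha\in R_0^-$ and $\gamma\in R_1^-$; a middle segment $\prod_{i\in I}p_i(\phi_i)$; and a right segment built from the factors $X_\beta(\chi_\beta)$ and $X_\delta(\psi_\delta)$ with $\beta\in R_0^+$ and $\delta\in R_1^+$. Since $X_\alpha(\chi)=\prod_{a\in\supp\chi}(x_\alpha\otimes a)^{(\chi(a))}$ is a product of the generators $(x_\alpha\otimes a)^{(s)}$ of $\bu_\Z^{\pm}(\fg\otimes A)$, and each $X_\gamma(\psi)$ with $\psi(\bb)\subset\{0,1\}$ is a product of the generators $x_\gamma\otimes c$, the left segment lies in $\bu_\Z^{-}(\fg\otimes A)$ and the right segment in $\bu_\Z^{+}(\fg\otimes A)$; the middle segment lies in $\bu_\Z^{0}(\fg\otimes A)$ by definition. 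Hence every element of $\B$, and therefore every element of $\bu_\Z(\fg\otimes A)$ (which is the $\Z$-span of $\B$), lies in $\bu_\Z^{-}(\fg\otimes A)\,\bu_\Z^{0}(\fg\otimes A)\,\bu_\Z^{+}(\fg\otimes A)$; equivalently, the multiplication map $\bu_\Z^{-}(\fg\otimes A)\otimes_\Z\bu_\Z^{0}(\fg\otimes A)\otimes_\Z\bu_\Z^{+}(\fg\otimes A)\to\bu_\Z(\fg\otimes A)$ is surjective. The reverse inclusion is immediate, since each of $\bu_\Z^{\pm}(\fg\otimes A)$ and $\bu_\Z^{0}(\fg\otimes A)$ is a $\Z$-subalgebra of $\bu_\Z(\fg\otimes A)$.

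There is no serious computation involved; the content is bookkeeping about the signs of roots. The one point I would check carefully is that the three-segment factorization is genuinely compatible with the $\pm$ grading, including on the odd side: the odd factors $X_\gamma$ with $\gamma\in R_1^{\pm}$ must be supplied by the generators $x_\gamma\otimes c$ of $\bu_\Z^{\pm}(\fg\otimes A)$, and the subtlety of the $2\gamma\in R_0$ case must not disturb the splitting. This last case is harmless precisely because $\gamma$ and $2\gamma$ share the same sign, so $x_{2\gamma}$ and $x_\gamma$ remain inside a single $\pm$ block and the left/middle/right factorization is preserved.
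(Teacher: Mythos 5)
Your proposal is correct and follows essentially the same route as the paper: choose the total order $(\preccurlyeq, R\cup I)$ with $R^-\preccurlyeq I\preccurlyeq R^+$ and read the factorization off the basis $\mathcal{B}$ from Theorem \ref{thm}. Your additional care about the reverse inclusion, the odd-root bookkeeping, and the $2\gamma\in R_0$ case only spells out details the paper leaves implicit.
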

\begin{proof}
In Theorem \ref{thm} choose $(\preccurlyeq,R)$ so that $R^-\preccurlyeq I\preccurlyeq R^+$. Then by Theorem \ref{thm} we can write every element of $\bu_{\Z}(\fg \otimes A)$ as a $\Z$-linear combination of elements of $\bu_{\Z}^{-}(\fg\otimes A)\bu_{\Z}^{0}(\fg\otimes A)\bu_\Z^{+}(\fg\otimes A)$.
\end{proof}

Let $\left(\preccurlyeq_\pm,R^\pm\right)$ be any total orders. Define $\B^\pm$ to be the sets of all products (without repetitions) of elements of the set
$$\left\{X_\alpha(\chi_\alpha),X_\gamma(\psi_\gamma)\ |\ \alpha\in R_0^\pm,\ \gamma\in R_1^\pm,\ \chi_\alpha,\psi_\gamma\in\f(\bb),\ \psi_\gamma(\bb)\subset\{0,1\}\right\}$$
taken in the orders given by $\preccurlyeq_\pm$. Given a total order $(\preccurlyeq,I)$ define
$$\B^0:=\left\{\prod_{i\in I}p_i(\varphi_i):\varphi_1,\ldots,\varphi_l\in\f(\Z_{\geq0})\right\}$$
with the products taken in the order given by $\preccurlyeq$.

\begin{prop}
Let  $\mathcal{B}^\pm$ and $\mathcal{B}^0$ be as above. Then
\begin{itemize}
\item [(1)] $\mathcal{B}^\pm$ are $\Z$ bases for $\bu_{\Z}^{\pm}(\fg \otimes A)$ respectively.
\item [(2)] $\mathcal{B}^{0}$ is a $\Z$ basis for $\bu_{\Z}^{0}(\fg \otimes A)$.
\end{itemize}
\end{prop}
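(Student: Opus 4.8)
The plan is to deduce this Proposition from the Main Theorem (Theorem \ref{thm}) together with the triangular decomposition already recorded in the preceding corollary, rather than re-running the full straightening argument. The key observation is that $\B^\pm$ and $\B^0$ are precisely the three pieces of the distinguished basis $\B$ of Theorem \ref{thm} when we use an order in which $R^-\preccurlyeq I\preccurlyeq R^+$; so everything I need has in effect already been proved, and the job is to isolate each factor.

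First I would establish linear independence, which is the easy direction. Each of $\B^+$, $\B^0$, $\B^-$ is a subset of a $\B$ obtained from a suitable total order on $R\cup I$ (choose the order so that $R^-\preccurlyeq I\preccurlyeq R^+$, and for $\B^\pm$ restrict to the relevant order $\preccurlyeq_\pm$ on $R^\pm$). Since $\B$ is $\Z$-linearly independent by Theorem \ref{thm}, any subset of it is too; hence $\B^\pm$ and $\B^0$ are $\Z$-linearly independent. Alternatively, for $\B^0$ one can cite Remark \ref{hbasis}: the $p_i(\chi)$ yield a $\C$-basis of $\bu(\{h_i\}\otimes A)$ for each $i$, and the $h_i$ are linearly independent, so products $\prod_{i\in I}p_i(\varphi_i)$ are $\C$-independent and therefore $\Z$-independent.

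For the spanning direction I would argue that each $\B^\pm$, respectively $\B^0$, spans the corresponding subalgebra $\bu_\Z^\pm(\fg\otimes A)$, respectively $\bu_\Z^0(\fg\otimes A)$. The subalgebra $\bu_\Z^+(\fg\otimes A)$ is generated by the $(x_\alpha\otimes a)^{(r)}$ and $x_\gamma\otimes c$ with $\alpha\in R_0^+$, $\gamma\in R_1^+$. The point is that all the straightening identities of Propositions \ref{strteven} and \ref{strtodd} that are needed to reorder and consolidate products of such positive generators stay inside $\bu_\Z^+(\fg\otimes A)$: for instance \eqref{xaxb} and Lemma \ref{pmbasislem} only ever produce factors $x_{j\gamma+k\beta}$ with $j\gamma+k\beta$ a nonnegative integer combination of positive roots, hence again positive, and \eqref{xalphmxgam}, \eqref{xgammaxdelta}, \eqref{xgam^2} behave the same way. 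Consequently the proof of Theorem \ref{thm}, applied verbatim but ignoring the $p_i$ factors and the negative generators (equivalently, applied to the nilpotent subalgebra $\np\otimes A$ in place of $\fg\otimes A$), shows by the same induction on degree that every monomial in the positive generators lies in the $\Z$-span of $\B^+$. The identical argument with $R^-$ in place of $R^+$ handles $\B^-$. For $\B^0$, the generators $p_i(\chi)$ with $\chi\in\f(\bb)$ all lie in $\bu_\Z^0$, and Lemma \ref{degpi} together with \eqref{pipj} lets one consolidate any product of $p_i(\chi)$'s into the ordered form $\prod_{i\in I}p_i(\varphi_i)$ modulo terms of strictly lower degree, so the same degree induction gives spanning by $\B^0$.

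The main obstacle, and the place I would be most careful, is verifying that the straightening identities genuinely preserve positivity of roots, i.e. that no identity used in the reordering of positive generators secretly introduces a factor indexed by a negative or zero root or by a $p_i(\chi)$. Identity \eqref{x+x-} is the delicate one, since it mixes $x_\alpha$ with $x_{-\alpha}$ and produces $p_\alpha$-factors; but that identity is only invoked for a positive root paired with its negative, a situation that does not arise when all generators already come from $R^+$, so it never enters the $\B^+$ computation. Once one checks that the only identities relevant to products of positive (respectively negative) generators are those whose right-hand sides again involve only positive (respectively negative) root vectors, the degree-induction of Theorem \ref{thm} transfers mutatis mutandis, and the proof is complete.

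\begin{proof}
Fix the total order $(\preccurlyeq,R\cup I)$ of the triangular decomposition corollary, so that $R^-\preccurlyeq I\preccurlyeq R^+$, and let $\B$ be the corresponding basis from Theorem \ref{thm}. With this order $\B^-$, $\B^0$, and $\B^+$ are exactly the subsets of $\B$ obtained by restricting to factors indexed by $R^-$, by $I$, and by $R^+$ respectively. Since $\B$ is $\Z$-linearly independent by Theorem \ref{thm}, each of $\B^\pm$ and $\B^0$ is $\Z$-linearly independent.

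It remains to prove spanning. Consider $\bu_\Z^+(\fg\otimes A)$. Every straightening identity of Propositions \ref{strteven} and \ref{strtodd} needed to reorder or consolidate a product of generators $(x_\alpha\otimes a)^{(r)}$ and $(x_\gamma\otimes c)$ with $\alpha\in R_0^+$, $\gamma\in R_1^+$ has a right-hand side all of whose root-vector factors are again indexed by nonnegative integer combinations of positive roots, hence by elements of $R^+$: this is visible in \eqref{xbxb}, \eqref{xaxb}, and Lemma \ref{pmbasislem} for the even generators, and in \eqref{xgam^2}, \eqref{xgammaxdelta}, and \eqref{xalphmxgam} for the odd ones. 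In particular the mixed identity \eqref{x+x-}, which would introduce $p_\alpha$-factors, is never invoked, since it applies only to a root paired with its negative. Therefore the degree induction in the proof of Theorem \ref{thm}, carried out with the negative generators and the $p_i(\chi)$ omitted, shows that every monomial in the positive generators lies in the $\Z$-span of $\B^+$; thus $\B^+$ spans $\bu_\Z^+(\fg\otimes A)$. Replacing $R^+$ by $R^-$ throughout gives the statement for $\B^-$.

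For $\bu_\Z^0(\fg\otimes A)$, the generators $p_i(\chi)$ with $\chi\in\f(\bb)$ commute across distinct $i$ by \eqref{pipj}, and Lemma \ref{degpi} expresses any product $p_i(\chi)p_i(\varphi)$ as an integral combination of $p_i(\chi+\varphi)$ together with elements of $\Z$--$\span\{p_i(\psi):\psi\in\f(\bb)\}$ of strictly smaller degree. By induction on degree, any product of the generators $p_i(\chi)$ can thus be written as a $\Z$-linear combination of ordered products $\prod_{i\in I}p_i(\varphi_i)$, that is, as a $\Z$-linear combination of elements of $\B^0$. Hence $\B^0$ spans $\bu_\Z^0(\fg\otimes A)$, which completes the proof.
\end{proof}
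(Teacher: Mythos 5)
Your proof is correct and takes essentially the same approach as the paper: the paper likewise proves spanning by rerunning the degree induction of Theorem \ref{thm} restricted to the relevant generators (citing Lemma \ref{brack} and \eqref{xbxb} for part (1), and commutativity of the $p_i$'s together with Lemma \ref{degpi} for part (2)). Your explicit extra steps --- realizing $\B^\pm$ and $\B^0$ as subsets of a suitable $\B$ to get $\Z$-linear independence, and checking that the straightening identities used never leave the positive (resp.\ negative) part --- are details the paper leaves implicit under ``similarly to the proof of Theorem \ref{thm}.''
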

\begin{proof}
Similarly to the proof of Theorem \ref{thm}. (1) can be proved by induction on the degree of a monomial in $\bu_{\Z}^{\pm}(\fg \otimes A)$ using Lemma \ref{brack}(3) and (4) and \eqref{xbxb}. For (2) note that it is enough to show that any product of elements of $\{p_i(\chi): \chi \in \mathcal{F}(B), i \in I \}$ is in the $\Z-$span of $\bu_{\Z}^{0}(\fg \otimes A)$. To show this we will proceed by induction on the degree of such a product. For the inductive step since $p_i(\chi) $ and $p_j(\chi')$ commute (for $i\neq j$ in $I$) it will suffice to apply Lemma \ref{degpi}.
\end{proof}

\section{Example}

In this section we give a specific example of the integral form and basis.

Let $\fg=A(1,0)=\mathfrak{sl}(2,1)$ and $A=\bbc[t]$. Then $\fg\otimes A=\mathfrak{sl}(2,1)\otimes A$ is a map superalgebra where the bracket on $\fg$ is the super commutator bracket. That is,
$$[x,y]=xy-(-1)^{\deg x\deg y}yx$$
for homogeneous $x,y$ and extended to all $\fg$ by linearity.

Recall that $\fg=\mathfrak{sl}(2,1)$ consists of $3\times3$ block  matrices of the form
\[\begin{bmatrix}
\begin{array}{c c |c}
a & b & c\\
d & e & f\\\hline
g & h & a+e
\end{array}
\end{bmatrix}\]
where $a,b,c,d,e,f,g,h,\in\C$.
%$\fg = \fg_{-1}\oplus \fg_{0} \oplus \fg_{1}$. Then $\fg_0 = <e_{22}-e_{33}, e_{11}+e_{22}, e_{23}, e_{32}>$,
%$\fg_1 = <e_{12}, e_{13}>$, and $\fg_{-1}= <e_{21}, e_{31}>$, where $e_{ij}$ denotes $ij$-matrix unit. This gives a triangular decomposition of $\fg$,
%where $\h = <e_{22}-e_{33}, e_{11}+e_{22}>$ is a Cartan subalgebra  and $\n = <e_{23}, e_{12}, e_{13}>$. Also note that $\n_0= <e_{23}>$ and $\n_1= <e_{12}, e_{13}>$.

Given a matrix $\left[a_{i,j}\right]$ define $\varepsilon_k\left(\left[a_{i,j}\right]\right):=a_{k,k}-a_{k+1,k+1}$. Define $\alpha_1:=\varepsilon_1-\varepsilon_2$ and $\alpha_2:=\varepsilon_2-\delta_1$, where $\delta_1=\varepsilon_3$. Fix the distinguished simple root system $\Delta=\{\alpha_1,\alpha_2\}$. Then $R_{0}^+=\{\alpha_1\}$ and $R_{1}^+=\{\alpha_2,(\alpha_1+\alpha_2)\}$. Fix the following Chevalley basis for $\lie{sl}(2,1)$: $h_1:=h_{\alpha_1}=e_{1,1}-e_{2,2}$ and, $h_2:=h_{\alpha_2}=e_{2,2}+e_{3,3}$. For the root vectors take $x_1:=x_{\alpha_1}=e_{1,2}$, $x_{-1}:=x_{-\alpha_1}=e_{2,1}$, $x_2:=x_{\alpha_2}=e_{2,3}$, $x_{-2}:=x_{-\alpha_2}=e_{3,2}$, and $x_3:=x_{\alpha_1+\alpha_2}=e_{1,3}$, $x_{-3}:=x_{-(\alpha_1+\alpha_2)}=e_{3,1}$.

%We have the following table for the bracket in $\lie{g}$.

%\[\begin{tabular}{|c|c|c|c|c|c|c|c|c|}\hline
%         & $x_1$    & $x_2$     & $x_3$     & $h_1$     & $h_2$     & $x_{-1}$   & $x_{-2}$  & $x_{-3}$  \\ \hline
%$x_1$    & 0        & $x_3$     &     0     & $-2x_1$   & $x_1$     & $h_1$      & 0         & $-x_{-2}$ \\ \hline
%$x_2$    & $-x_3$   & 0         &    0      & $x_2$     & 0         & 0          & $h_2$     & $x_{-1}$  \\ \hline
%$x_3$    &     0    & 0         &   0       & $-x_3$    & $x_3$     & $-x_2$     & $x_1$     & $h_1+h_2$ \\ \hline
%$h_1$    & $2x_1$   & $-x_2$    & $x_3$     & 0         & 0         & $-2x_{-1}$ & $x_{-2}$  & $-x_{-3}$ \\ \hline
%$h_2$    & $-x_1$   &   0       & $-x_3$    & 0         & 0         & $x_{-1}$   & 0         & $x_{-3}$  \\ \hline
%$x_{-1}$ & $-h_1$   &  0        & $x_2$     & $2x_{-1}$ & $-x_{-1}$ & 0          & $-x_{-3}$ & 0         \\ \hline
%$x_{-2}$ & 0        &  $h_2$    & $x_1$     & $-x_{-2}$ & 0         & $x_{-3}$   & 0         & 0         \\ \hline
%$x_{-3}$ & $x_{-2}$ &  $x_{-1}$ & $h_1+h_2$ & $x_{-3}$  & $-x_{-3}$ & 0          & 0         & 0         \\ \hline
%\end{tabular}\]

%Define nilpotent sub-superalgebras
%$\lie n^\pm=\bbc x_1^\pm\oplus\bbc x_2^\pm\oplus\bbc x_3^\pm$
%and note that $\lie g=\lie n^-\oplus \lie h\oplus \lie n^+.$ Note $\n^\pm=(\n^\pm)_{\overline{0}} \oplus (\n^\pm)_{\overline{1}}$ where $(\n^\pm)_{\overline{0}}=\mbox{span}\{x_{\pm1}\}$ and %$(\n^\pm)_{\overline{1}}=\mbox{span}\{x_{\pm2}, x_{\pm3}\}$.

Note that $\{x_{-1},h_1,x_1\}$ is an $\mathfrak{sl}_2$-triple.

$\bb=\{t^j\mid j\in\Z_{\geq 0}\}$ is a basis for $A$ which is closed under multiplication. Each $\chi\in\f(\bb)$ corresponds in a natural way to a $\chi'\in\f(\Z_{\geq0})$.

$\bu_\Z(\fg\otimes A)$ is the $\Z$-subalgebra of $\bu(\fg\otimes A)$ generated by
$$\left\{\left(x_{\pm1}\otimes t^m\right)^{(s)},x_{\pm j}\otimes t^k,p_i(\chi'):s,k,m\in\Z_{\geq 0},\ \ j=2,3,\ \ i=1,2,\ \ \chi'\in\f(\Z_{\geq0})\right\}$$
The $\Z$-subalgebras $\bu_\Z^\pm(\fg\otimes A)$ are generated by
$$\left\{\left(x_{\pm1}\otimes t^j\right)^{(r)},x_{\pm m}\otimes t^n:m\in\{2,3\},\ j,n,r\in\Z_{\geq0}\right\}$$
The $\Z$-subalgebra $\bu_\Z^0(\fg\otimes A)$ is generated by
$$\left\{p_1(\chi),p_2(\chi'):\chi,\chi'\in\f(\Z_{\geq0})\right\}$$

Given $\alpha\in R$, $j\in\{\pm1,\pm2,\pm3\}$ and $\chi\in \f(\bbz_{\geq0})$ define
$$X_j(\chi):=\prod_{m=0}^\infty\left(x_j\otimes t^m\right)^{(\chi(m))}$$

If we fix an order $\left(\preccurlyeq,R\cup\{1,2\}\right)$ such that $$-\alpha_1\preccurlyeq-\alpha_2\preccurlyeq-(\alpha_1+\alpha_2)\preccurlyeq1\preccurlyeq2\preccurlyeq\alpha_1\preccurlyeq\alpha_2\preccurlyeq
(\alpha_1+\alpha_2)$$
then the $\Z$-basis $\B$ for $\bu_\Z(\fg\otimes A)$ from Theorem \ref{thm} is
$$\left\{\prod_{j=1}^3X_{-j}(\phi_{-j})\prod_{i=1}^2p_i(\varphi_i)\prod_{j=1}^3X_j(\phi_{j})
:\varphi_1,\varphi_2,\phi_{\pm1},\phi_{\pm2},\phi_{\pm3}\in\f(\Z_{\geq0}),\ \phi_{\pm2}(k)\phi_{\pm3}(k)\leq1,\ \forall k\in\Z_{\geq0}\right\}$$

The $\Z$-bases $\B^\pm$ for $\bu_\Z^\pm(\fg\otimes A)$ are
$$\left\{\prod_{j=1}^3X_{\pm j}(\phi_{\pm j}):\phi_{\pm1},\phi_{\pm2},\phi_{\pm3}\in\f(\Z_{\geq0}),\ \phi_{\pm2}(k)\phi_{\pm3}(k)\leq1,\ \forall k\in\Z_{\geq0}\right\}$$

The $\Z$-basis $\B^0$ for $\bu_\Z^0(\fg\otimes A)$ is $\left\{p_1(\varphi_1)p_2(\varphi_2):\varphi_1,\varphi_2\in\f(\Z_{\geq0})\right\}$.

\end{document}